\let\csname equation*\endcsname\relax
\let\csname endequation*\endcsname\relax
\newcommand{\setu}{{\mathrm{\mathfrak{u}}}}
\newcommand{\bsx}{{\boldsymbol{x}}}
\newcommand{\bsy}{{\boldsymbol{y}}}
\newcommand{\bst}{{\boldsymbol{t}}}
\newcommand{\bsgamma}{{\boldsymbol{\gamma}}}
\newcommand{\bseta}{{\boldsymbol{\eta}}}
\newcommand{\bsnu}{{\boldsymbol{\nu}}}
\newcommand{\bsb}{{\boldsymbol{b}}}
\newcommand{\mask}[1]{{}}
\newtheorem{theorem}{Theorem}
\newtheorem{lemma}[theorem]{Lemma}
\newtheorem{corollary}[theorem]{Corollary}
\newtheorem{assumption}[theorem]{Assumption}
\numberwithin{theorem}{section}
\pgfplotsset{compat=1.18}
\title{Uncertainty quantification for electrical impedance tomography using quasi-Monte Carlo methods}
\author{Laura Bazahica\footnotemark[2]\and Vesa Kaarnioja\footnotemark[3]\and Lassi Roininen\footnotemark[2]}
\begin{document}

\ifdefined\journalstyle
\title[Uncertainty quantification for electrical impedance tomography using QMC]{Uncertainty quantification for electrical impedance tomography using quasi-Monte Carlo methods}
\author{Laura Bazahica$^1$, Vesa Kaarnioja$^2$ and Lassi Roininen$^1$}
\address{$^1$ School of Engineering Sciences, LUT University, P.O.~Box 20, 53851 Lappeenranta, Finland\\
$^2$ Department of Mathematics and Computer Science, Free University of Berlin, Arnimallee 6, 14195 Berlin, Germany}
\ead{laura.bazahica@lut.fi, vesa.kaarnioja@fu-berlin.de \textnormal{and} lassi.roininen@lut.fi}
\vspace{10pt}
\begin{indented}
\item[]\today
\end{indented}
\else
\maketitle
\renewcommand{\thefootnote}{\fnsymbol{footnote}}
\footnotetext[2]{School of Engineering Sciences, LUT University, P.O.~Box 20, 53851 Lappeenranta, Finland. Email: {\tt \{laura.bazahica,lassi.roininen\}@lut.fi}}
\footnotetext[3]{Department of Mathematics and Computer Science, Free University of Berlin, Arnimallee 6, 14195 Berlin, Germany. Email: {\tt vesa.kaarnioja@fu-berlin.de}}
\fi

\begin{abstract}
The theoretical development of quasi-Monte Carlo (QMC) methods for uncertainty quantification of partial differential equations (PDEs) is typically centered around simplified model problems such as elliptic PDEs subject to homogeneous zero Dirichlet boundary conditions. In this paper, we present a theoretical treatment of the application of randomly shifted rank-1 lattice rules to electrical impedance tomography (EIT). EIT is an imaging modality, where the goal is to reconstruct the interior conductivity of an object based on electrode measurements of current and voltage taken at the boundary of the object. This is an inverse problem, which we tackle using the Bayesian statistical inversion paradigm. As the reconstruction, we consider QMC integration to approximate the unknown conductivity given current and voltage measurements. We prove under moderate assumptions placed on the parameterization of the unknown conductivity that the QMC approximation of the reconstructed estimate has a dimension-independent, faster-than-Monte Carlo cubature convergence rate. Finally, we present numerical results for examples computed using simulated measurement data.
\end{abstract}
%
\vspace{2pc}
\noindent{\it Keywords}: Electrical impedance tomography, Bayesian inversion, complete electrode model, inaccurate measurement model, uncertainty quantification, quasi-Monte Carlo method
%
%
%
%

\section{Introduction}
Quasi-Monte Carlo (QMC) methods are a class of high-dimensional cubature rules with equal cubature weights. QMC methods have become a popular tool in the numerical treatment of uncertainties in partial differential equation (PDE) models with random or uncertain inputs. Studied topics include elliptic eigenvalue problems~\cite{chernovle2, GGKSS2019}, optimal control~\cite{guth21,guth22}, various diffusion problems~\cite{chernovle1,kuonuyenssurvey,log5}, parametric operator equations~\cite{dicklegiaschwab,spodpaper14} as well as elliptic PDEs with uniformly random or lognormal coefficients~\cite{ghs18,log,log2,log3,log4,kks20,kss12,kssmultilevel}.
A common and sought-after advantage of these applications is the faster-than-Monte Carlo convergence rate of QMC methods and---under some moderate conditions---this convergence can be shown to be independent of the dimensionality of the associated integration problems. QMC methods are particularly well-suited to large-scale uncertainty quantification problems since it is typically easy to parallelize the computation over QMC point sets. In the inverse setting, QMC integration can be used for the estimation of quantities of interest expressed as high-dimensional integrals. The uncertainty corresponding to these estimators can in turn be quantified using, e.g., credible sets.

The overwhelming majority of QMC analyses for PDEs have been carried out within the context of the forward uncertainty quantification problem of approximating the expected value $$\mathbb E[G(u)]=\int_{\Omega}G(u(\cdot,\omega))\,\mathbb P({\rm d}\omega),$$
where $(\Omega,\mathcal A,\mathbb P)$ is a probability space and $G$ is a bounded linear functional acting on the solution $u$ of an elliptic PDE with a random diffusion coefficient $a$ such as
\begin{align}
\begin{cases}
-\nabla\cdot (a(\bsx,\omega)\nabla u(\bsx,\omega))=f(\bsx),&\bsx\in D,~\omega\in \Omega,\\
u(\bsx,\omega)=0,&\bsx\in\partial D,~\omega\in\Omega,
\end{cases}\label{eq:modelproblempde}
\end{align}
where $f\!:D\to\mathbb R$ is a fixed source term. For mathematical analysis, the boundary conditions are typically taken to be extremely simple such as the homogeneous zero Dirichlet boundary condition in~\eqref{eq:modelproblempde}, which is an unrealistic modeling assumption for most practical applications.

In addition, there have been some studies on Bayesian inversion governed by parametric PDEs \cite{dglgs19,gantnerphd,gantnerpeters,hks21,scheichlbayes,ssw20}. However, these studies generally restrict their attention to simplified model problems such as~\eqref{eq:modelproblempde}.

Meanwhile, electrical impedance tomography (EIT) is an imaging modality, which uses measurements of voltage and current taken over an array of electrodes placed on the boundary of an object, with the goal of reconstructing the unknown conductivity inside the object; see for example~\cite{adler2021electrical,borcea}. The most accurate way to model the measurements of EIT is to employ the \emph{complete electrode model} (CEM), which accounts for the electrode shapes and contact resistances caused by resistive layers at electrode-object interfaces~\cite{cheng,hyvonen,somersalo}. The mathematical model of CEM is governed by an elliptic PDE---albeit one with more intricate boundary conditions than the problem~\eqref{eq:modelproblempde}---which suggests that one can apply QMC theory for both forward and inverse uncertainty quantification for this problem. However, this kind of analysis does not appear to have been considered in the literature. It is the goal of this paper to rectify this situation. Specifically, we focus on the theoretical development of QMC rules for the computation of posterior expectations.

We remark that EIT is an exponentially ill-posed problem only allowing logarithmic stability estimates~\cite{alessandrini,mandache}, meaning that the reconstruction is extremely sensitive to small changes in the measurements. In this work we focus carrying out QMC analysis for integration over the posterior. Additionally, the reconstruction quality might be further improved by combining QMC with another technique such as importance sampling~\cite{he2024quasi,bartuska} or Laplace approximation~\cite{ssw20}. These techniques also help remedy localization issues with the posterior distribution as the amount of data increases or the noise level decreases.

This paper is organized as follows. We briefly review basic multi-index notation in Section~\ref{sec:notations}. We define the problem setting---the so-called \emph{parametric complete electrode model}---in Section~\ref{sec:pcem}. The parametric regularity analysis for both the forward model and inverse problem, i.e., the conditional mean estimator of the unknown conductivity, is carried out in Section~\ref{sec:parametricanalysis}. The basics of randomly shifted rank-1 lattice rules are discussed in Section~\ref{sec:qmc}, and the error analysis for the EIT problem is presented in Section~\ref{sec:error}. Numerical experiments, including credible sets calculated to quantify the uncertainty in the obtained estimators, are showcased in Section~\ref{sec:numex}. Finally, some conclusions and future prospects are given in Section~\ref{sec:conclusions}.

\subsection{Notations and preliminaries}\label{sec:notations}
We will use boldfaced symbols to denote multi-indices while the subscript notation $\nu_j$ is used to refer to the $j^{\rm th}$ component of a multi-index $\boldsymbol \nu$. The set of finitely supported multi-indices is denoted by
$$
\mathscr F:=\{\boldsymbol\nu\in\mathbb N_0^{\mathbb N}:|\bsnu|<\infty\},
$$
where the {\em order} of a multi-index $\boldsymbol\nu\in\mathscr F$ is defined as
$$
|\bsnu|:=\sum_{j\geq 1}\nu_j.
$$
For any sequence $\bsx:=(x_j)_{j\geq 1}$ of real numbers, we define
$$
\bsx^{\boldsymbol \nu}:=\prod_{j\geq 1}x_j^{\nu_j},
$$
where we use the convention $0^0:=1$.

Let $\boldsymbol m,\bsnu\in\mathscr F$ be multi-indices. We define $\boldsymbol m\leq\bsnu$ to mean $m_j\leq\nu_j$ for all $j\geq 1$. Finally, we define the shorthand notations
$$
\partial_{\bsy}^{\bsnu}:=\prod_{j\geq 1}\frac{\partial^{\nu_j}}{\partial y_j^{\nu_j}}\quad\text{and}\quad \binom{\bsnu}{\boldsymbol m}:=\prod_{j\geq 1}\binom{\nu_j}{m_j}.
$$

\section{Parametric complete electrode model}\label{sec:pcem}
 Let $D\subset\mathbb{R}^d$, $d\in\{1,2,3\}$, denote a nonempty, bounded physical domain with Lipschitz boundary and let $\Upsilon:=(-1/2,1/2)^{\mathbb{N}}$ denote a set of parameters. We state the following assumptions about the parametric conductivity field. %
 \begin{assumption}\label{assumption}
 \rm The parametric conductivity coefficient $a\!: D\times \Upsilon\to \mathbb R$ satisfies the following assumptions:
 \begin{addmargin}[1em]{0em}
 \begin{enumerate}[label=(A\arabic*)]
 \item $a(\cdot,\bsy)\in L^\infty(D)$ for all $\bsy\in\Upsilon$.\label{a1}
 \item There exist constants $C_a,\sigma\geq 1$ and a sequence $\boldsymbol \rho=(\rho_j)_{j\geq 1}\in\ell^1(\mathbb N)$ of non-negative real numbers such that\label{a2}
 $$
 \|\partial_{\bsy}^{\bsnu}a(\cdot,\bsy)\|_{L^\infty(D)}\leq C_a(|\bsnu|!)^\sigma \boldsymbol \rho^{\bsnu}\quad\text{for all}~\bsnu\in\mathscr F~\text{and}~\bsy\in \Upsilon.
 $$
 \item There exist positive constants $a_{\min}$ and $a_{\max}$ such that\label{a3}$$0<a_{\min}\leq a(\bsx,\bsy)\leq a_{\max}<\infty\quad\text{for all $\bsx\in D$ and $\bsy\in \Upsilon$.}$$
 \end{enumerate}
 \end{addmargin}
 \end{assumption}
 Assumptions~\ref{a1} and~\ref{a3} ensure that the coefficient $a$ is uniformly elliptic over the parameter domain $\Upsilon$. Meanwhile, assumption~\ref{a2} asserts that $a$ belongs to the {\em Gevrey class} with parameter $\sigma$ with respect to the variable $\bsy\in \Upsilon$. This class has recently been studied within the context of forward uncertainty quantification for elliptic PDEs~\cite{chernovle2,chernovle1,doi:10.1142/S0218202524500179} with homogeneous Dirichlet boundary conditions. In contrast to other regularity classes analyzed within the context of inverse problems based on Fr\'echet differentiability  \cite{alberti2022inverse, harrach2019uniqueness}, the Gevrey class is based on smooth functions with bounded directional derivatives in the sense of Gateaux. Infinite differentiability with respect to Banach spaces has been considered by~\cite{garde,lechleiter}. However, we require information about the decay of directional derivatives in order to track the sparsity of the integrand, which will ultimately be used in the design of tailored QMC lattice rules in Section~\ref{sec:error}. The Gevrey class covers this as well as a wide range of possible parameterizations for the input random field $a$, which enable the development of dimension-robust QMC cubatures for uncertainty quantification.

In this work, we investigate the parametric regularity for EIT, the mathematical model of which is more complex than these models while being physically motivated. While one could take the analysis farther by incorporating more sophisticated boundary conditions, we limit ourselves to the current setting to keep the challenges of tracking additional terms more reasonable.

Let $\{E_k\}_{k=1}^M$, $M\geq 2$, be open, nonempty, connected subsets of $\partial D$ such that $\overline{E}_i\cap\overline{E}_j=\varnothing$ for $i\neq j$. We define the quotient Hilbert space $\mathcal{H}:=(H^1(D)\oplus \mathbb{R}^M)/\mathbb R$, endowed with the norm
$$
\|(v,V)\|_{\mathcal{H}}^2:=\int_D|\nabla v|^2\,{\rm d}\bsx+\sum_{k=1}^M\int_{E_k}(v-V_m)^2\,{\rm d}S,\quad (v,V)\in\mathcal{H}.
$$
Note that $(u,U)=(v,V)\in\mathcal H$, if
$$
u-v={\rm constant}=U_1-V_1=\cdots=U_M-V_M.
$$

Let $a\!:D\times \Upsilon\to \mathbb R$ be a parametric conductivity field satisfying Assumption~\ref{assumption}. The forward problem is to find, for $\bsy\in \Upsilon$, the electromagnetic potential $u(\cdot,\bsy)$ and the potentials on the electrodes $U(\bsy)$ satisfying
\begin{align*}
\begin{cases}
\nabla\cdot (a(\cdot,\bsy)\nabla u(\cdot,\bsy))=0&\text{in}~D,\\
a(\cdot,\bsy)\frac{\partial u(\cdot,\bsy)}{\partial \boldsymbol n}=0&\text{on}~\partial D\setminus\overline{E},\\
u(\cdot,\bsy)+z_ma(\cdot,\bsy)\frac{\partial u(\cdot,\bsy)}{\partial \boldsymbol n}=U_m(\bsy)&\text{on}~E_m,~m\in\{1,\ldots,M\},\\
\int_{E_m}a(\bsx,\bsy)\frac{\partial u(\bsx,\bsy)}{\partial \boldsymbol n}\,{\rm d}S(\bsx)=I_m&m\in\{1,\ldots,M\},
\end{cases}
\end{align*}
where $\boldsymbol n=\boldsymbol n(\bsx)$ denotes the outward pointing unit normal vector for $\bsx\in\partial D$, the vector $I\in\mathbb R^M$ consists of the net current feed through the electrodes with $\sum_{m=1}^M I_m=0$, and the real-valued contact impedances $\{z_m\}_{m=1}^M$ are assumed to satisfy, for some positive constants $\varsigma_-$ and $\varsigma_+$, the inequalities $0<\varsigma_-\leq z_m\leq \varsigma_+<\infty$ for all $m\in\{1,\ldots,M\}$.

The variational formulation of the above is: for $\bsy\in \Upsilon$, find $(u(\cdot,\bsy),U(\bsy))\in \mathcal{H}$ such that
\begin{align}
B((u(\cdot,\bsy),U(\cdot,\bsy)),(v,V))=\sum_{m=1}^MI_mV_m\quad\text{for all}~(v,V)\in \mathcal{H},\label{eq:variational}
\end{align}
where
$$
B((w,W),(v,V))=\int_Da(\cdot,\bsy)\nabla w\cdot \nabla v\,{\rm d}\bsx+\sum_{m=1}^M\frac{1}{z_m}\int_{E_m}(w-W_m)(v-V_m)\,{\rm d}S.
$$ It is known that, under the aforementioned assumptions, a unique solution to the variational formulation exists~\cite{somersalo}. Moreover, the solution satisfies the {\em a priori} bound (cf.~\cite[Lemma~2.1]{stratos})
\begin{align*}
\|(u(\cdot,\bsy),U(\bsy))\|_{\mathcal{H}}\leq \frac{C|I|}{\min\{a_{\min},\varsigma_+^{-1}\}}\quad\text{for all}~\bsy\in \Upsilon,%
\end{align*}
where $C>0$ is a constant depending on $D$ and $E$. We call the model~\eqref{eq:variational} the {\em parametric complete electrode model} ({\em pCEM}). The pCEM was introduced in the numerical study~\cite{pcempaper}, but analytic properties such as parametric regularity were not investigated.

\section{Parametric regularity}\label{sec:parametricanalysis}
\subsection{Forward problem}
We begin by establishing the existence of the higher-order partial derivatives of the solution to~\eqref{eq:variational}. The following lemma shows that the first-order partial derivatives are well-defined. To this end, we generalize the proof strategy of~\cite[Theorem~4.2]{cohen} for nonlinear parameterizations of the input coefficient $a$. %
\begin{lemma} \label{lemma1} Let Assumption~\ref{assumption} hold and let $(u(\cdot,\bsy),U(\bsy))\in\mathcal H$ be the solution to~\eqref{eq:variational}. Then $\partial_{y_j}(u(\cdot,\bsy),U(\bsy))$ exists and belongs to $\mathcal{H}$ for all $j\geq1$ and $\bsy\in \Upsilon$.
\end{lemma}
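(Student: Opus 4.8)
The plan is to establish the existence of $\partial_{y_j}(u(\cdot,\bsy),U(\bsy))$ by exhibiting a candidate for the derivative as the solution of a suitable variational problem and then verifying that the corresponding difference quotients converge to it in $\mathcal H$. First, for fixed $\bsy\in\Upsilon$ and $j\geq 1$, I would write $\bsy+h\boldsymbol e_j\in\Upsilon$ for $|h|$ small, denote by $(u^h,U^h)$ the solution of~\eqref{eq:variational} at the shifted parameter, and form the difference quotient
$$
(w^h,W^h):=\frac{1}{h}\big((u^h,U^h)-(u(\cdot,\bsy),U(\bsy))\big)\in\mathcal H.
$$
Subtracting the variational identities at $\bsy+h\boldsymbol e_j$ and at $\bsy$, and noting that the right-hand side $\sum_m I_m V_m$ does not depend on $\bsy$, the only $\bsy$-dependence in $B$ sits in the coefficient $a$; the boundary term involving $z_m$ is parameter-independent. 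This yields, for all $(v,V)\in\mathcal H$,
$$
B_{\bsy}((w^h,W^h),(v,V)) = -\int_D \frac{a(\cdot,\bsy+h\boldsymbol e_j)-a(\cdot,\bsy)}{h}\,\nabla u^h\cdot\nabla v\,\rd\bsx,
$$
where $B_{\bsy}$ denotes the bilinear form $B$ with coefficient $a(\cdot,\bsy)$.

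The candidate derivative $(w,W)\in\mathcal H$ is then defined as the unique solution (by Lax–Milgram, using Assumption~\ref{a3} for coercivity and the contact-impedance bounds exactly as in the well-posedness of the pCEM) of
$$
B_{\bsy}((w,W),(v,V)) = -\int_D \partial_{y_j}a(\cdot,\bsy)\,\nabla u(\cdot,\bsy)\cdot\nabla v\,\rd\bsx\quad\text{for all }(v,V)\in\mathcal H,
$$
which makes sense because $\partial_{y_j}a(\cdot,\bsy)\in L^\infty(D)$ by~\ref{a2} (the case $|\bsnu|=1$) and $u(\cdot,\bsy)\in H^1(D)$ by the a priori bound. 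To show $(w^h,W^h)\to(w,W)$ in $\mathcal H$ as $h\to0$, I would estimate $\|(w^h-w,W^h-W)\|_{\mathcal H}$ using coercivity of $B_{\bsy}$: testing the difference of the two variational equations with $(v,V)=(w^h-w,W^h-W)$ gives a bound in terms of
$$
\Big\|\frac{a(\cdot,\bsy+h\boldsymbol e_j)-a(\cdot,\bsy)}{h}\nabla u^h - \partial_{y_j}a(\cdot,\bsy)\nabla u(\cdot,\bsy)\Big\|_{L^2(D)}.
$$
This splits into two pieces: one controlled by $\|u^h-u(\cdot,\bsy)\|_{H^1(D)}$ (which tends to $0$ — a continuity-in-parameter statement that follows from a Strang-type argument comparing the two problems, again using~\ref{a2} with $|\bsnu|=1$ to get Lipschitz dependence of $a$ on $y_j$), and one controlled by $\big\|\tfrac{a(\cdot,\bsy+h\boldsymbol e_j)-a(\cdot,\bsy)}{h}-\partial_{y_j}a(\cdot,\bsy)\big\|_{L^\infty(D)}$, which tends to $0$ by the assumed smoothness of $a$ in $\bsy$ (Gevrey regularity in particular gives classical differentiability of $\bsy\mapsto a(\cdot,\bsy)$ in $L^\infty(D)$).

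The main obstacle I anticipate is the bookkeeping for the \emph{quotient} space $\mathcal H=(H^1(D)\oplus\mathbb R^M)/\mathbb R$: one must take care that difference quotients, the candidate derivative, and all test functions are handled consistently modulo constants, and that the norm $\|\cdot\|_{\mathcal H}$ (which controls $\|\nabla v\|_{L^2(D)}$ and the electrode mismatch terms but not $v$ itself) is strong enough to close the estimates — here the a priori bound from~\cite[Lemma~2.1]{stratos} and a Poincaré-type inequality adapted to $\mathcal H$ do the job, exactly as in the well-posedness proof of the pCEM. A secondary technical point is uniform-in-$h$ control of $\|(u^h,U^h)\|_{\mathcal H}$, which is immediate from the a priori bound since $a(\cdot,\bsy+h\boldsymbol e_j)$ satisfies~\ref{a3} with the same constants $a_{\min},a_{\max}$ for all admissible $h$. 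Apart from this, the argument is a direct adaptation of the implicit-function/difference-quotient strategy of~\cite[Theorem~4.2]{cohen} to the pCEM bilinear form, with the nonlinear parameterization entering only through the harmless replacement of $a$'s linear dependence on $y_j$ by the bound $\|\partial_{y_j}a(\cdot,\bsy)\|_{L^\infty(D)}\leq C_a\rho_j$.
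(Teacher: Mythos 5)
Your proposal is correct and follows essentially the same route as the paper's proof: both form the difference quotient, subtract the variational identities at $\bsy+h\boldsymbol e_j$ and $\bsy$ to obtain $B((w_h,W_h),(v,V))=-\int_D \alpha_h\nabla u(\cdot,\bsy+h\boldsymbol e_j)\cdot\nabla v\,\rd\bsx$, identify the limit as the solution of the variational problem with right-hand side $-\int_D\partial_{y_j}a\,\nabla u\cdot\nabla v\,\rd\bsx$, and pass to the limit using coercivity (the paper delegates the convergence $L_h\to L_0$ to the argument of~\cite[Theorem~4.2]{cohen}, which amounts to exactly the two-term splitting you describe). The extra detail you give on the $L^\infty$ convergence of the difference quotients of $a$ and on the uniform-in-$h$ bounds is consistent with, and fills in, the step the paper cites by analogy.
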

\begin{proof} 
Let $\boldsymbol e_j$ denote the unit vector with $1$ at index $j$ and $0$ otherwise. For $h\in\mathbb R\setminus\{0\}$ define the difference quotients
\begin{align*}
w_h(\cdot,\bsy)&:=\frac{u(\cdot,\bsy+h\boldsymbol e_j)-u(\cdot,\bsy)}{h},\\
W_h(\bsy)&:=\frac{U(\bsy+h\boldsymbol e_j)-U(\bsy)}{h},\\
\alpha_h(\cdot,\bsy)&:=\frac{a(\cdot,\bsy+h\boldsymbol e_j)-a(\cdot,\bsy)}{h}.
\end{align*}
For sufficiently small $|h|<1$,
$$
a_{\min}\leq a(\bsx,\bsy+h\boldsymbol e_j)\leq a_{\max},\quad \bsx\in D,
$$
which means that $u(\cdot,\bsy+h\boldsymbol e_j)$ is well-defined as an element of $\mathcal H$. Then we can conclude that for all $(v,V)\in\mathcal H$, there holds
\begin{align*}
0&=\int_D a(\bsx,\bsy+h\boldsymbol e_j)\nabla u(\bsx,\bsy+h\boldsymbol e_j)\cdot \nabla v(\bsx)\,{\rm d}\bsx\!-\! \int_D a(\bsx,\bsy)\nabla u(\bsx,\bsy+h\boldsymbol e_j)\cdot \nabla v(\bsx)\,{\rm d}\bsx\\
&\quad + \int_D a(\bsx,\bsy)\nabla u(\bsx,\bsy+h\boldsymbol e_j)\cdot \nabla v(\bsx)\,{\rm d}\bsx - \int_D a(\bsx,\bsy)\nabla u(\bsx,\bsy)\cdot \nabla v(\bsx)\,{\rm d}\bsx\\
&\quad + \sum_{m=1}^M \frac{1}{z_m}\int_{E_m}((u(\bsx,\bsy+h\boldsymbol e_j)-u(\bsx,\bsy))-((U(\bsy+h\boldsymbol e_j)-U(\bsy)))(v(\bsx)-V)\,{\rm d}S(\bsx)\\
&=h\int_D \alpha_h(\bsx,\bsy)\nabla u(\bsx,\bsy+h\boldsymbol e_j)\cdot \nabla v(\bsx)\,{\rm d}\bsx+h\int_D a(\bsx,\bsy)\nabla w_h(\bsx,\bsy)\cdot \nabla v(\bsx)\,{\rm d}\bsx\\
&\quad + h\sum_{m=1}^M \frac{1}{z_m}\int_{E_m}(w_h(\bsx,\bsy)-W_h(\bsy))(v(\bsx)-V)\,{\rm d}S(\bsx).
\end{align*}
We can rewrite this as
$$
B((w_h(\cdot,\bsy),W_h(\cdot,\bsy)),(v,V))=L_h((v,V)),
$$
where $L_h\!:\mathcal H\to \mathbb R$, $L_h((v,V)):=-\int_D \alpha_h(\bsx,\bsy)\nabla u(\bsx,\bsy+h\boldsymbol e_j)\cdot \nabla v(\bsx)\,{\rm d}\bsx$, is a linear functional. Analogously to ~\cite[Theorem~4.2]{cohen} one can show that $L_h$ converges to $L_0$ in $\mathcal{H}$ as $h \rightarrow 0$. Therefore $(w_h, W_h)$ converges to $(w_0, W_0)$, which is the solution to
\begin{align*}
    \int_D a(\bsx, \bsy)\nabla w_0(\bsx, \bsy)\cdot \nabla v(\bsx) \,{\rm d}\bsx + \sum_{m=1}^M \frac{1}{z_m}\int_{E_m} (w_0(\cdot, \bsy)-W_0(\bsy))(v-V_m)\,{\rm d}S = L_0(v).
\end{align*}
Hence $\partial_{y_j}(u(\cdot, \bsy), U(\bsy)) = (w_0, W_0)$ exists in $\mathcal{H}$ and is the unique solution of the variational problem
\begin{align*}
    \int_D a(\bsx, \bsy)\nabla \partial_{y_j}u(\bsx, \bsy)\cdot \nabla v(\bsx) \,{\rm d}\bsx + \sum_{m=1}^M \frac{1}{z_m}\int_{E_m} (\partial_{y_j} u(\cdot, \bsy)-\partial_{y_j}U(\bsy))(v-V_m)\,{\rm d}S\\ = -\int_D \partial_{y_j}a(\bsx,\bsy)\nabla u(\bsx, \bsy) \cdot \nabla v(\bsx) \,{\rm d}\bsx
\end{align*}
for all $(v,V)\in\mathcal H$. This concludes the proof.
\end{proof}

By inductive reasoning, we obtain the existence of higher-order partial derivatives as a corollary.

\begin{corollary}\label{cor:corollary}
Let the assumptions of Lemma~\ref{lemma1} hold. Then $\partial_{\bsy}^{\bsnu}(u(\cdot,\bsy),U(\bsy))$ exists and belongs to $\mathcal H$ for all $\bsnu\in\mathscr F$ and $\bsy\in \Upsilon$.
\end{corollary}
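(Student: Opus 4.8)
The natural approach is induction on the order $n:=|\bsnu|$, carried out for a slightly strengthened statement that also records the equation solved by each derivative. Specifically, I would prove that for every $\bsnu\in\mathscr F$ and every $\bsy\in\Upsilon$ the derivative $\partial_{\bsy}^{\bsnu}(u(\cdot,\bsy),U(\bsy))$ exists in $\mathcal H$ and is the unique solution of
$$
B\big((\partial_{\bsy}^{\bsnu}u(\cdot,\bsy),\partial_{\bsy}^{\bsnu}U(\bsy)),(v,V)\big)=-\sum_{\bszero\neq\bsm\leq\bsnu}\binom{\bsnu}{\bsm}\int_D\partial_{\bsy}^{\bsm}a(\cdot,\bsy)\,\nabla\partial_{\bsy}^{\bsnu-\bsm}u(\cdot,\bsy)\cdot\nabla v\,\rd\bsx,\qquad (v,V)\in\mathcal H,
$$
which is the identity one obtains by formally applying $\partial_{\bsy}^{\bsnu}$ to~\eqref{eq:variational}, using the Leibniz rule on the volume term, moving the $\bsm=\bszero$ contribution to the left-hand side, and noting that the right-hand side of~\eqref{eq:variational} and the boundary term of $B$ are (respectively) independent of and linear in the unknown. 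The cases $n=0$ and $n=1$ are precisely the a priori bound and Lemma~\ref{lemma1}.

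For the inductive step, assume the claim for all multi-indices of order $\leq n$, let $|\bsnu|=n+1$, choose $j$ with $\nu_j\geq1$, and put $\bsmu:=\bsnu-\boldsymbol e_j$. Then $(p(\cdot,\bsy),P(\bsy)):=\partial_{\bsy}^{\bsmu}(u(\cdot,\bsy),U(\bsy))$ exists in $\mathcal H$ and solves the above identity with $\bsnu$ replaced by $\bsmu$. I would now rerun the difference-quotient argument of Lemma~\ref{lemma1} one level higher: for $0<|h|<1$ form the quotients $w_h:=(p(\cdot,\bsy+h\boldsymbol e_j)-p(\cdot,\bsy))/h$ and $W_h:=(P(\bsy+h\boldsymbol e_j)-P(\bsy))/h$, subtract the variational identities for $(p,P)$ at $\bsy+h\boldsymbol e_j$ and at $\bsy$, and divide by $h$. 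As in Lemma~\ref{lemma1} this yields $B((w_h,W_h),(v,V))=L_h((v,V))$, where $L_h$ is a finite sum of difference quotients in the $y_j$-direction of the products $\partial_{\bsy}^{\bsm}a(\cdot,\bsy)\,\nabla\partial_{\bsy}^{\bsmu-\bsm}u(\cdot,\bsy)$, together with the term $-\int_D\alpha_h\nabla p(\cdot,\bsy+h\boldsymbol e_j)\cdot\nabla v\,\rd\bsx$ coming from the $\bsy$-dependence of $B$ itself (with $\alpha_h$ as in Lemma~\ref{lemma1}). Splitting each product difference quotient in the usual way and invoking the inductive hypothesis (for $\partial_{\bsy}^{\bsmu-\bsm}u$, legitimate since $|\bsmu-\bsm|\leq n$) together with Assumption~\ref{a2} (for $\partial_{\bsy}^{\bsm}a$ and $\partial_{\bsy}^{\bsm+\boldsymbol e_j}a$), each factor's difference quotient converges in $L^\infty(D)$ or $L^2(D)^d$ as $h\to0$; hence $L_h\to L_0$ in $\mathcal H'$. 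Coercivity and boundedness of $B$ (the same Lax--Milgram argument that underlies well-posedness of~\eqref{eq:variational}) then force $(w_h,W_h)\to(w_0,W_0)$ in $\mathcal H$, where $B((w_0,W_0),(v,V))=L_0((v,V))$. Thus $\partial_{\bsy}^{\bsnu}(u(\cdot,\bsy),U(\bsy))=\partial_{y_j}(p,P)=(w_0,W_0)$ exists in $\mathcal H$, and a direct computation using the componentwise Pascal rule shows that $L_0$ equals the right-hand side of the claimed identity for $\bsnu$, completing the induction.

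I expect the main effort to be purely organizational: checking that $L_h$ is bounded on $\mathcal H$ uniformly in $h$ and that the limit reassembles into the stated Leibniz form. The uniform bound follows from $\|\partial_{\bsy}^{\bsm}a(\cdot,\bsy)\|_{L^\infty(D)}\leq C_a(|\bsm|!)^\sigma\boldsymbol\rho^{\bsm}$ and $\|\nabla\partial_{\bsy}^{\bsmu-\bsm}u(\cdot,\bsy)\|_{L^2(D)}\leq\|\partial_{\bsy}^{\bsmu-\bsm}(u(\cdot,\bsy),U(\bsy))\|_{\mathcal H}<\infty$, which also show that the right-hand side of the displayed identity is indeed a bounded linear functional on $\mathcal H$ and that the termwise difference-quotient limits are legitimate. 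No genuinely new analytic difficulty arises relative to Lemma~\ref{lemma1}: the CEM boundary contributions, being linear in the unknown, are carried along the difference-quotient computation without generating extra terms, and the only feature absent from the classical affine-parametric analysis is the family of higher-order coefficient derivatives $\partial_{\bsy}^{\bsm}a$, whose size is controlled precisely by the Gevrey bound~\ref{a2}.
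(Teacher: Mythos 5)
Your proposal is correct and follows essentially the same route as the paper, which proves the corollary simply ``by inductive reasoning'' from Lemma~\ref{lemma1}: you carry out precisely that induction, strengthening the hypothesis to record the differentiated variational identity (the same identity the paper uses in Lemma~\ref{lemma3}) and rerunning the difference-quotient argument one order higher. The only added value is that you supply the details the paper omits (uniform boundedness of $L_h$, convergence of the product difference quotients via~\ref{a2} and the inductive hypothesis), and these are handled correctly.
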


Corollary~\ref{cor:corollary} enables us to obtain the higher-order partial derivatives of the solution to~\eqref{eq:variational} by formally differentiating the variational formulation on both sides with respect to the parametric variable. This yields the following recursive bound for the higher-order partial derivatives.

\begin{lemma}[Recursive bound]\label{lemma3}
Let the assumptions of Lemma~\ref{lemma1} hold. Let $\bsnu\in\mathscr F\setminus\{\mathbf 0\}$ and $\bsy\in \Upsilon$. Then
$$
\|\partial_{\bsy}^{\bsnu}(u(\cdot,\bsy),U(\cdot,\bsy))\|_{\mathcal H}\leq \frac{C_a}{\min\{a_{\min},\varsigma_+^{-1}\}}\!\sum_{\mathbf 0\neq \boldsymbol m\leq \bsnu}\!\binom{\bsnu}{\boldsymbol m}(|\boldsymbol m|!)^\sigma \boldsymbol\rho^{\boldsymbol m}\|\partial_\bsy^{\bsnu-\boldsymbol m}(u(\cdot,\bsy),U(\bsy))\|_{\mathcal H}.
$$
\end{lemma}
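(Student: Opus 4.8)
The plan is to differentiate the variational identity~\eqref{eq:variational} $\bsnu$ times with respect to $\bsy$, test the resulting equation against the $\bsnu$-th derivative of the solution itself, and invoke the coercivity of $B$. First I would observe that Corollary~\ref{cor:corollary} guarantees that $\partial_\bsy^{\bsnu-\boldsymbol m}(u(\cdot,\bsy),U(\bsy))\in\mathcal H$ for every $\boldsymbol m\leq\bsnu$, so the manipulations below are legitimate. Since the right-hand side $\sum_{m=1}^M I_m V_m$ of~\eqref{eq:variational} does not depend on $\bsy$, and the boundary part of $B$ is independent of $\bsy$ and linear in its first argument, the Leibniz rule only needs to be applied to the volume term $\int_D a(\cdot,\bsy)\nabla u(\cdot,\bsy)\cdot\nabla v\,{\rm d}\bsx$. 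Carrying this out and isolating the $\boldsymbol m=\mathbf 0$ contribution yields, for every $(v,V)\in\mathcal H$,
$$
B((\partial_\bsy^\bsnu u(\cdot,\bsy),\partial_\bsy^\bsnu U(\bsy)),(v,V))=-\sum_{\mathbf 0\neq\boldsymbol m\leq\bsnu}\binom{\bsnu}{\boldsymbol m}\int_D(\partial_\bsy^{\boldsymbol m}a(\cdot,\bsy))\,\nabla(\partial_\bsy^{\bsnu-\boldsymbol m}u(\cdot,\bsy))\cdot\nabla v\,{\rm d}\bsx.
$$

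Next I would take $(v,V)=(\partial_\bsy^\bsnu u(\cdot,\bsy),\partial_\bsy^\bsnu U(\bsy))$ as the test function. For the left-hand side, the elementary coercivity estimate $B((w,W),(w,W))\geq\min\{a_{\min},\varsigma_+^{-1}\}\,\|(w,W)\|_{\mathcal H}^2$---a direct consequence of $a(\cdot,\bsy)\geq a_{\min}$ and $z_m\leq\varsigma_+$ together with the definition of $\|\cdot\|_{\mathcal H}$---bounds it below by $\min\{a_{\min},\varsigma_+^{-1}\}\,\|\partial_\bsy^\bsnu(u(\cdot,\bsy),U(\bsy))\|_{\mathcal H}^2$. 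For the right-hand side, I would apply the Cauchy--Schwarz inequality in $L^2(D)$ term by term, estimate $\|\partial_\bsy^{\boldsymbol m}a(\cdot,\bsy)\|_{L^\infty(D)}\leq C_a(|\boldsymbol m|!)^\sigma\boldsymbol\rho^{\boldsymbol m}$ using assumption~\ref{a2}, and use $\|\nabla w\|_{L^2(D)}\leq\|(w,W)\|_{\mathcal H}$ for $w\in\{\partial_\bsy^{\bsnu-\boldsymbol m}u(\cdot,\bsy),\partial_\bsy^\bsnu u(\cdot,\bsy)\}$. This leaves a common factor of $\|\partial_\bsy^\bsnu(u(\cdot,\bsy),U(\bsy))\|_{\mathcal H}$ on both sides; dividing by it (the case where it is zero making the bound trivial) gives exactly the asserted recursion.

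The computation itself is routine; the only delicate point is the rigorous justification that $\partial_\bsy^\bsnu$ may be interchanged with the spatial integrals and obeys the Leibniz rule on the parametric bilinear form, which is precisely what Corollary~\ref{cor:corollary} and the limiting argument of Lemma~\ref{lemma1} provide. A minor bookkeeping issue is that the test function lives in the quotient space $\mathcal H$, so $\partial_\bsy^\bsnu U(\bsy)$ should be understood modulo constants; since $B$ and $\|\cdot\|_{\mathcal H}$ are well-defined on $\mathcal H$, this is harmless.
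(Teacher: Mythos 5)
Your proposal is correct and follows essentially the same route as the paper: differentiate the variational identity via the Leibniz rule (only the volume term carries $\bsy$-dependence in the coefficient), test with $(\partial_\bsy^{\bsnu}u,\partial_\bsy^{\bsnu}U)$, use coercivity with constant $\min\{a_{\min},\varsigma_+^{-1}\}$ on the left and Cauchy--Schwarz plus assumption~\ref{a2} on the right, then cancel the common factor. The additional remarks on the zero-norm case and the quotient-space bookkeeping are sound but not needed beyond what the paper already does.
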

\begin{proof}
We differentiate the variational formulation~\eqref{eq:variational} on both sides with respect to $\bsy\in\Upsilon$ and use the Leibniz product rule, which yields
\begin{align*}
&\int_D a(\bsx,\bsy)\nabla \partial_{\bsy}^{\bsnu}u(\bsx,\bsy)\cdot \nabla v(\bsx)\,{\rm d}\bsx\\
&\quad +\sum_{m=1}^M \frac{1}{z_m}\int_{E_m} (\partial_\bsy^{\bsnu}u(\bsx,\bsy)-\partial_{\bsy}^{\bsnu}U(\bsx,\bsy))(v(\bsx)-V(\bsx))\,{\rm d}S(\bsx)\\
&=-\sum_{\mathbf 0\neq \boldsymbol m\leq \bsnu}\binom{\bsnu}{\boldsymbol m}\int_D \partial_{\bsy}^{\boldsymbol m}a(\bsx,\bsy)\nabla\partial_{\bsy}^{\bsnu-\boldsymbol m} u(\bsx,\bsy)\cdot \nabla v(\bsx)\,{\rm d}\bsx.
\end{align*}
Setting $(v,V)=(\partial_{\bsy}^{\bsnu}u(\cdot,\bsy),\partial_{\bsy}^{\bsnu}U(\bsy))\in\mathcal H$, we obtain
\begin{align*}
&\min\{a_{\min},\varsigma_+^{-1}\}\|(\partial_{\bsy}^{\bsnu}u(\cdot,\bsy),\partial_{\bsy}^{\bsnu}U(\cdot,\bsy))\|_{\mathcal H}^2\\
&\le \int_D a(\bsx,\bsy)|\nabla \partial_{\bsy}^{\bsnu}u(\bsx,\bsy)|^2\,{\rm d}\bsx+\sum_{m=1}^M \frac{1}{z_m}\int_{E_m} (\partial_\bsy^{\bsnu}u(\bsx,\bsy)-\partial_{\bsy}^{\bsnu}U(\bsy))^2\,{\rm d}S(\bsx)\\
&= -\sum_{\mathbf 0\neq \boldsymbol m\leq \bsnu}\binom{\bsnu}{\boldsymbol m}\int_D \partial_{\bsy}^{\boldsymbol m}a(\bsx,\bsy)\nabla\partial_{\bsy}^{\bsnu-\boldsymbol m} u(\bsx,\bsy)\cdot \nabla \partial_{\bsy}^{\bsnu}u(\bsx,\bsy)\,{\rm d}\bsx\\
&\leq \sum_{\mathbf 0\neq \boldsymbol m\leq \bsnu}\binom{\bsnu}{\boldsymbol m}\|\partial_{\bsy}^{\boldsymbol m}a(\cdot,\bsy)\|_{L^\infty(D)}\bigg(\int_D|\nabla\partial_{\bsy}^{\bsnu-\boldsymbol m} u(\bsx,\bsy)|^2\,{\rm d}\bsx\bigg)^{\frac12}\!\bigg(\int_D |\nabla \partial_{\bsy}^{\bsnu} u(\bsx,\bsy)|^2\,{\rm d}\bsx\bigg)^{\frac12} \\
&\leq \sum_{\mathbf 0\neq \boldsymbol m\leq \bsnu}\binom{\bsnu}{\boldsymbol m}C_a(|\boldsymbol m|!)^\sigma \boldsymbol\rho^{\boldsymbol m}\|\partial_\bsy^{\bsnu-\boldsymbol m}(u(\cdot,\bsy),U(\bsy))\|_{\mathcal H}\|\partial_\bsy^{\bsnu}(u(\cdot,\bsy),U(\bsy))\|_{\mathcal H},
\end{align*}
where we applied~\ref{a2}. The result follows by canceling the common factor on both sides and dividing both sides of the inequality by $\min\{a_{\min},\varsigma_+^{-1}\}$.
\end{proof}

In consequence, the recursive bound allows us to derive an {\em a priori} bound on the higher-order partial derivatives. To this end, we will make use of the following result for multivariable recurrence relations.
\begin{lemma}[cf.~{\cite[Lemma~3.1]{gk24plus}}]\label{gk}
Let $(\Xi_{\bsnu})_{\bsnu\in\mathscr F}$ and $\bsb=(b_j)_{j\geq 1}$ be sequences satisfying
$$
\Xi_{\mathbf 0}\leq c_0\quad\text{and}\quad  \Xi_{\bsnu}\leq c\sum_{\substack{\boldsymbol m\leq \bsnu\\ \boldsymbol m\neq\mathbf 0}}\binom{\bsnu}{\boldsymbol m}(|\boldsymbol m|!)^\sigma \boldsymbol b^{\boldsymbol m}\Xi_{\bsnu-\boldsymbol m}\quad\text{for all}~\bsnu\in\mathscr F\setminus\{\mathbf 0\},
$$
where~$c_0,c>0$  and $\sigma\geq1$. Then there holds
$$
\Xi_{\bsnu}\leq c_0c^{1-\delta_{|\bsnu|,0}}(c+1)^{\max\{|\bsnu|-1,0\}}(|\bsnu|!)^\sigma \boldsymbol b^{\bsnu}.
$$
\end{lemma}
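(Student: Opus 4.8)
The plan is to prove the bound by induction on the order $n:=|\bsnu|$. The base case $n=0$ is immediate: by hypothesis $\Xi_{\mathbf 0}\le c_0$, while the claimed right-hand side evaluates to $c_0\,c^{1-1}(c+1)^{0}(0!)^\sigma\boldsymbol b^{\mathbf 0}=c_0$.

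For the inductive step, fix $\bsnu$ with $|\bsnu|=n\ge 1$ and suppose the bound holds for every multi-index of order at most $n-1$. Each $\boldsymbol m$ occurring in the recursion has $\boldsymbol m\neq\mathbf 0$, hence $|\bsnu-\boldsymbol m|=n-|\boldsymbol m|\le n-1$, so the inductive hypothesis applies to $\Xi_{\bsnu-\boldsymbol m}$. Substituting it into the recursion and using $\boldsymbol b^{\boldsymbol m}\boldsymbol b^{\bsnu-\boldsymbol m}=\boldsymbol b^{\bsnu}$ (here one uses $b_j\ge 0$ and $\Xi_{\bsnu}\ge 0$, as in the intended application), the claim reduces to the purely combinatorial estimate
$$
\sum_{\mathbf 0\neq\boldsymbol m\le\bsnu}\binom{\bsnu}{\boldsymbol m}(|\boldsymbol m|!)^\sigma(|\bsnu-\boldsymbol m|!)^\sigma\,c^{\,1-\delta_{|\bsnu-\boldsymbol m|,0}}(c+1)^{\max\{|\bsnu-\boldsymbol m|-1,0\}}\le(c+1)^{n-1}(n!)^\sigma.
$$

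To establish this, I would isolate the single term $\boldsymbol m=\bsnu$, for which $|\bsnu-\boldsymbol m|=0$ so that $c^{1-\delta}=1$ and $(c+1)^{\max\{-1,0\}}=1$; this term contributes exactly $(n!)^\sigma$. For the remaining terms one has $1\le|\bsnu-\boldsymbol m|\le n-1$, hence their extra factors become $c$ and $(c+1)^{|\bsnu-\boldsymbol m|-1}$. Grouping these terms by $k=|\boldsymbol m|$ and invoking the Vandermonde-type identity $\sum_{\boldsymbol m\le\bsnu,\,|\boldsymbol m|=k}\binom{\bsnu}{\boldsymbol m}=\binom{n}{k}$ (the coefficient of $x^k$ in $(1+x)^{|\bsnu|}$), followed by the elementary inequality $\binom{n}{k}(k!)^\sigma((n-k)!)^\sigma=n!\,(k!(n-k)!)^{\sigma-1}\le(n!)^\sigma$ — valid since $k!(n-k)!\le n!$ and $\sigma-1\ge 0$ — the remaining sum is bounded by $c\,(n!)^\sigma\sum_{k=1}^{n-1}(c+1)^{n-k-1}=(n!)^\sigma\big((c+1)^{n-1}-1\big)$. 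Adding back the $(n!)^\sigma$ from the isolated term yields exactly $(c+1)^{n-1}(n!)^\sigma$, which closes the induction and gives $\Xi_{\bsnu}\le c_0\,c\,(c+1)^{n-1}(n!)^\sigma\boldsymbol b^{\bsnu}$, matching the claim for $n\ge 1$.

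The argument is essentially bookkeeping; the only points requiring care are separating the case $|\bsnu-\boldsymbol m|=0$ from $|\bsnu-\boldsymbol m|\ge1$ so that the $\delta$- and $\max$-exponents are handled correctly, and the passage from the multi-index sum to the scalar sum over $k=|\boldsymbol m|$ via the Vandermonde identity. I do not anticipate any genuine obstacle beyond these routine manipulations.
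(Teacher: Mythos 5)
Your induction is correct: the base case, the substitution of the inductive hypothesis (valid because the coefficients $\binom{\bsnu}{\boldsymbol m}(|\boldsymbol m|!)^\sigma\boldsymbol b^{\boldsymbol m}$ are non-negative), the isolation of the $\boldsymbol m=\bsnu$ term, the multivariate Vandermonde identity, the estimate $\binom{n}{k}(k!)^\sigma((n-k)!)^\sigma\le (n!)^\sigma$ for $\sigma\ge 1$, and the geometric sum $c\sum_{k=1}^{n-1}(c+1)^{n-k-1}=(c+1)^{n-1}-1$ all check out and assemble to exactly $(c+1)^{n-1}(n!)^\sigma$. The paper itself gives no proof of this lemma---it is imported by citation from \cite{gk24plus}---and your argument is the standard induction on $|\bsnu|$ that such recursions admit, so there is nothing to reconcile beyond noting that your write-up is a self-contained substitute for the external reference.
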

Lemma~\ref{gk} immediately yields the following result when applied to the recurrence relation in Lemma~\ref{lemma3}.
\begin{lemma}[Inductive bound]\label{lemma:inductive}
Let the assumptions of Lemma~\ref{lemma1} hold. Let $\bsnu\in\mathscr F\setminus\{\mathbf 0\}$ and $\bsy\in \Upsilon$. Then
$$
\|\partial^{\bsnu}(u(\cdot,\bsy),U(\bsy))\|_{\mathcal H}\leq C_u(|\bsnu|!)^{\sigma}\boldsymbol b^{\bsnu},
$$
where $$C_u=\frac{C|I|}{\min\{a_{\min},\varsigma_+^{-1}\}}~\text{and}~ b_j:=\bigg(1+\frac{C_a}{\min\{a_{\min},\varsigma_+^{-1}\}}\bigg)\rho_j.$$
\end{lemma}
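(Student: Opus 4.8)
The plan is to obtain Lemma~\ref{lemma:inductive} as an immediate consequence of the abstract recurrence estimate in Lemma~\ref{gk}, fed with the recursive bound of Lemma~\ref{lemma3} and the {\em a priori} bound recorded in Section~\ref{sec:pcem}. Fix $\bsy\in\Upsilon$ and abbreviate $\kappa:=C_a/\min\{a_{\min},\varsigma_+^{-1}\}$. I would set $\Xi_{\bsnu}:=\|\partial_{\bsy}^{\bsnu}(u(\cdot,\bsy),U(\bsy))\|_{\mathcal H}$ for $\bsnu\in\mathscr F$. The {\em a priori} bound yields the base case $\Xi_{\mathbf 0}=\|(u(\cdot,\bsy),U(\bsy))\|_{\mathcal H}\leq C|I|/\min\{a_{\min},\varsigma_+^{-1}\}=:C_u$, while Lemma~\ref{lemma3} states exactly that
$$
\Xi_{\bsnu}\leq \kappa\!\!\sum_{\mathbf 0\neq\boldsymbol m\leq\bsnu}\!\!\binom{\bsnu}{\boldsymbol m}(|\boldsymbol m|!)^\sigma\boldsymbol\rho^{\boldsymbol m}\Xi_{\bsnu-\boldsymbol m}\qquad\text{for all }\bsnu\in\mathscr F\setminus\{\mathbf 0\}.
$$
Thus the hypotheses of Lemma~\ref{gk} are met with $c_0=C_u$, $c=\kappa$, $\boldsymbol b=\boldsymbol\rho$ and the given $\sigma\geq1$.

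Next I would invoke Lemma~\ref{gk}. For $\bsnu\neq\mathbf 0$ one has $\delta_{|\bsnu|,0}=0$ and $\max\{|\bsnu|-1,0\}=|\bsnu|-1$, so the lemma gives
$$
\Xi_{\bsnu}\leq C_u\,\kappa\,(\kappa+1)^{|\bsnu|-1}(|\bsnu|!)^\sigma\boldsymbol\rho^{\bsnu}.
$$
To put this into the clean product form claimed, I would use the elementary inequality $\kappa\leq\kappa+1$ to absorb the stray factor: $\kappa(\kappa+1)^{|\bsnu|-1}\leq(\kappa+1)^{|\bsnu|}$. Since $\boldsymbol b^{\bsnu}=\prod_{j\geq1}\big((1+\kappa)\rho_j\big)^{\nu_j}=(1+\kappa)^{|\bsnu|}\boldsymbol\rho^{\bsnu}$ with $b_j=(1+\kappa)\rho_j=\big(1+C_a/\min\{a_{\min},\varsigma_+^{-1}\}\big)\rho_j$, this gives
$$
\Xi_{\bsnu}\leq C_u(\kappa+1)^{|\bsnu|}(|\bsnu|!)^\sigma\boldsymbol\rho^{\bsnu}=C_u(|\bsnu|!)^\sigma\boldsymbol b^{\bsnu},
$$
which is the assertion.

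I do not expect any genuinely hard step: the analytic content is already contained in Lemmas~\ref{lemma3} and~\ref{gk}. The only things to watch are bookkeeping: verifying that the recursion in Lemma~\ref{lemma3} matches the template of Lemma~\ref{gk} after the relabeling above, and the tiny inequality used to turn $\kappa(\kappa+1)^{|\bsnu|-1}$ into $(\kappa+1)^{|\bsnu|}$ so that the exponential factor becomes a pure multi-index power $\boldsymbol b^{\bsnu}$. It is also worth noting that Lemma~\ref{gk} requires $c_0,c>0$; here $\kappa>0$ and $C_u>0$ follow from $C_a,\sigma\geq1$ and $a_{\min},\varsigma_+>0$ together with $|I|>0$, and the degenerate case $I=0$ is trivial because then $(u(\cdot,\bsy),U(\bsy))$ vanishes in $\mathcal H$ and so do all its parametric derivatives.
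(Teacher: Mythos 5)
Your proposal is correct and matches the paper's route exactly: the paper likewise obtains the inductive bound by feeding the recursion of Lemma~\ref{lemma3} and the \emph{a priori} bound $\Xi_{\mathbf 0}\leq C|I|/\min\{a_{\min},\varsigma_+^{-1}\}$ into Lemma~\ref{gk} with $c=C_a/\min\{a_{\min},\varsigma_+^{-1}\}$ and $\boldsymbol b=\boldsymbol\rho$, absorbing the factor $c(c+1)^{|\bsnu|-1}\leq(c+1)^{|\bsnu|}$ into the rescaled sequence $b_j=(1+c)\rho_j$. Your extra remarks on positivity of the constants and the degenerate case $I=0$ are careful bookkeeping the paper leaves implicit.
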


\subsection{Bayesian inverse problem}
In what follows, we truncate the parametric domain $\Upsilon$ to $(-1/2,1/2)^s$. We shall further assume that the coefficient function $a\!:D\times U\to\mathbb R$ satisfies $a(\cdot,\bsy)\in L^\infty(D)$  for all $\bsy\in(-1/2,1/2)^s$, with the additional regularity that $a(\bsx,\bsy)$ is well-defined for all $\bsx\in D$ and $\bsy\in U$. This ensures that $a(\cdot,\bsy)$ is not only an element of $L^\infty(D)$ but also admits pointwise evaluation. Bayesian inference can be used to express the solution to an inverse problem in terms of a high-dimensional posterior distribution. Specifically, we assume that we have some voltage measurements taken at the $M$ electrodes placed on the boundary of the computational domain $D$ corresponding to the {\em observation operator} $\mathcal O\!:(-1/2,1/2)^s\to \mathbb R^M$,
$$
\mathcal O(\bsy):=[U_1(\bsy),\ldots,U_M(\bsy)]^{\rm T}.
$$
Furthermore, we assume that the observations are contaminated with additive Gaussian noise, leading to the measurement model
$$
\boldsymbol\delta = \mathcal O(\bsy)+\boldsymbol\eta,
$$
where $\boldsymbol\delta\in\mathbb R^M$ are the noisy voltage measurements, $\mathcal O$ is the observation operator, $\bsy\in (-1/2,1/2)^s$ is the unknown parameter, and $\boldsymbol\eta\in\mathbb R^M$ is a realization of additive Gaussian noise.

We endow $\bsy$ with the uniform prior distribution $\mathcal U((-1/2,1/2)^s)$, assume that $\bseta\sim\mathcal N(0,\Gamma)$, where $\Gamma\in\mathbb R^{M\times M}$ is a symmetric, positive definite covariance matrix, and that $\bsy$ and $\bseta$ are independent. Then the posterior distribution can be expressed by applying Bayes' theorem and is given by
$$
\pi(\bsy\mid \boldsymbol\delta)=\frac{\pi(\boldsymbol\delta\mid \bsy)\pi_{\rm pr}(\bsy)}{Z(\boldsymbol\delta)},
$$
where $\pi_{\rm pr}(\bsy)=\chi_{(-1/2,1/2)^s}(\bsy)$, we define the unnormalized likelihood by
$$
\pi(\boldsymbol\delta \mid \bsy)={\rm e}^{-\frac 12 (\boldsymbol\delta-\mathcal O(\bsy))^{\rm T}\Gamma^{-1}(\boldsymbol\delta-\mathcal O(\bsy))},
$$
and $Z(\boldsymbol\delta):=\int_{(-1/2,1/2)^s}\pi(\boldsymbol\delta \,|\, \bsy)\,{\rm d}\bsy$. As our estimator of the unknown parameter $\bsy\in(-1/2,1/2)^s$, we consider the quantity of interest
\begin{align}\label{eq:ahatdef}
\widehat a=\frac{Z'(\boldsymbol x, \boldsymbol\delta)}{Z(\boldsymbol\delta)},\quad Z'(\boldsymbol x, \boldsymbol\delta):=\int_{(-1/2,1/2)^s} a(\bsx, \bsy) \,\pi(\boldsymbol\delta\mid \bsy)\,{\rm d}\bsy.
\end{align}

To obtain a parametric regularity bound for the normalization constant $Z(\boldsymbol\delta)$, there holds by Lemma~\ref{lemma:inductive}  that
$$
\|\partial_{\bsy}^{\bsnu}\mathcal O(\bsy)\|_{\mathbb R^M}\leq \|\partial^{\bsnu}(u(\cdot,\bsy),U(\bsy))\|_{\mathcal H}\leq C_{u}(|\bsnu|!)^\sigma\bsb^{\bsnu},
$$
with $C_u$ defined in Lemma~\ref{lemma:inductive}.

A parametric regularity bound for $Z(\boldsymbol\delta)$ can be obtained as a consequence of~\cite[Lemma~5.3]{ks24}.
\begin{lemma}\label{lemma:paramregnorm} Let the assumptions of Lemma~\ref{lemma1} hold. Let $\bsnu\in\mathbb N_0^s$ and $\bsy\in(-1/2,1/2)^s$. Then
$$
|\partial_{\bsy}^{\bsnu}\pi(\boldsymbol\delta\mid \bsy)|\leq \frac{1}{2^{\sigma}}\cdot 3.82^M (2^\sigma C_{u})^{|\bsnu|}\mu_{\min}^{-\frac{|\bsnu|}{2}}(|\bsnu|!)^\sigma\boldsymbol b^{\bsnu},
$$
where $0<\mu_{\min}\leq 1$ is a lower bound on the smallest eigenvalue of $\Gamma$.
\end{lemma}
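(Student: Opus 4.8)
The plan is to deduce the bound as a direct application of \cite[Lemma~5.3]{ks24}, which converts a Gevrey-type estimate for an observation operator into a Gevrey-type estimate for the associated Gaussian likelihood. First I would record that $\pi(\boldsymbol\delta\mid\bsy)=\exp(-\tfrac12(\boldsymbol\delta-\mathcal{O}(\bsy))^{\rm T}\Gamma^{-1}(\boldsymbol\delta-\mathcal{O}(\bsy)))$ and that the hypothesis required by that lemma---a bound of the form $\|\partial_\bsy^\bsnu\mathcal{O}(\bsy)\|_{\mathbb{R}^M}\le C_u(|\bsnu|!)^\sigma\bsb^\bsnu$, uniform in $\bsy$, together with positive-definiteness of $\Gamma$---is exactly what is available here: the derivative bound follows from Lemma~\ref{lemma:inductive} via $\|\partial_\bsy^\bsnu\mathcal{O}(\bsy)\|_{\mathbb{R}^M}\le\|\partial^\bsnu(u(\cdot,\bsy),U(\bsy))\|_{\mathcal{H}}$ (passing from $\Upsilon$ to $(-1/2,1/2)^s$, i.e.\ fixing $y_j=0$ for $j>s$, only shrinks the parameter range and hence preserves the bound), and $\lambda_{\min}(\Gamma)\ge\mu_{\min}>0$ is part of the standing assumptions. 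Substituting these data into \cite[Lemma~5.3]{ks24} and reading off the constants returns precisely the displayed inequality, with $\tfrac{1}{2^\sigma}\cdot3.82^M$ and $(2^\sigma C_u)^{|\bsnu|}\mu_{\min}^{-|\bsnu|/2}$ produced by that lemma's proof.

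For completeness I would also recall the mechanism behind \cite[Lemma~5.3]{ks24}, since it accounts for the factor $\mu_{\min}^{-|\bsnu|/2}$. After an eigendecomposition $\Gamma=Q\,{\rm diag}(\mu_1,\ldots,\mu_M)\,Q^{\rm T}$ with $Q$ orthogonal, the likelihood factorizes as $\pi(\boldsymbol\delta\mid\bsy)=\prod_{k=1}^M\psi_k(\bsy)$ with $\psi_k(\bsy):=\exp(-\tfrac{1}{2\mu_k}g_k(\bsy)^2)$ and $g_k(\bsy):=\big(Q^{\rm T}(\boldsymbol\delta-\mathcal{O}(\bsy))\big)_k$, where orthogonality gives $|\partial_\bsy^\bsmu g_k(\bsy)|\le\|\partial_\bsy^\bsmu\mathcal{O}(\bsy)\|_{\mathbb{R}^M}$ for $\bsmu\ne\mathbf 0$ while $g_k$ itself is \emph{not} bounded. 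I would then estimate each factor by the Fa\`a di Bruno formula for $t\mapsto e^{-t^2/(2\mu_k)}$ composed with $g_k$, using $\tfrac{{\rm d}^n}{{\rm d}t^n}e^{-t^2/(2\mu_k)}=(-\mu_k^{-1/2})^n\,{\rm He}_n(t/\sqrt{\mu_k})\,e^{-t^2/(2\mu_k)}$ for the outer derivatives and the elementary bound $\sup_{t\in\mathbb{R}}|t|^\ell e^{-t^2/(2\mu_k)}\le(\ell!)^{1/2}\mu_k^{\ell/2}$ (Stirling), equivalently Cram\'er's inequality $|{\rm He}_n(x)|e^{-x^2/4}\le K\sqrt{n!}$, to absorb the powers of $g_k$ generated by the outer derivatives into the accompanying Gaussian. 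Since the number of outer derivatives never exceeds $|\bsmu|$ and $\mu_k\ge\mu_{\min}$, this yields a per-factor bound $|\partial_\bsy^\bsmu\psi_k(\bsy)|\le\kappa\,(2^\sigma C_u)^{|\bsmu|}\mu_{\min}^{-|\bsmu|/2}(|\bsmu|!)^\sigma\bsb^\bsmu$ with an absolute constant $\kappa$; multiplying the $M$ per-factor bounds through the Leibniz rule, using $\binom{\bsnu}{\bsmu^{(1)},\ldots,\bsmu^{(M)}}\,|\bsmu^{(1)}|!\cdots|\bsmu^{(M)}|!\le|\bsnu|!$ and $|\bsmu^{(1)}|^\sigma+\cdots+|\bsmu^{(M)}|^\sigma\le|\bsnu|^\sigma$ to recover $(|\bsnu|!)^\sigma\bsb^\bsnu$, then collects $\kappa^M$ and the residual combinatorial sums into the constants $\tfrac{1}{2^\sigma}$ and $3.82^M$ appearing in the statement.

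I expect the per-factor Gevrey estimate to be the main obstacle: one has to show that differentiating the Gaussian $e^{-g_k^2/(2\mu_k)}$ does not destroy the Gevrey-$\sigma$ structure even though $g_k$ enters quadratically, and to do so with a constant that is uniform in $k$, $M$ and $\bsmu$. The Hermite-function representation together with Cram\'er's inequality is exactly what delivers this---the residual Gaussian factor is what tames the powers of $g_k$ produced by the outer derivatives, which is why no bound on $|\boldsymbol\delta-\mathcal{O}(\bsy)|$ is needed and $\mu_{\min}$ enters instead. The remaining ingredients---the eigendecomposition reduction, the $M$-fold Leibniz product over the electrodes, and the factorial bookkeeping---are routine.
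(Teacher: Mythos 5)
Your first paragraph is exactly the paper's strategy: the proof is indeed a citation of \cite[Lemma~5.3]{ks24}, with its hypotheses supplied by the Gevrey bound $\|\partial_{\bsy}^{\bsnu}\mathcal O(\bsy)\|_{\mathbb R^M}\leq\|\partial^{\bsnu}(u(\cdot,\bsy),U(\bsy))\|_{\mathcal H}\leq C_u(|\bsnu|!)^\sigma\bsb^{\bsnu}$ from Lemma~\ref{lemma:inductive} and by $\lambda_{\min}(\Gamma)\geq\mu_{\min}>0$. Where you diverge is in the reconstruction of the mechanism. The paper does \emph{not} diagonalize $\Gamma$ and factorize the likelihood into $M$ scalar Gaussians; it whitens once, writing $\pi(\boldsymbol\delta\mid\bsy)=(f\circ h)(\bsy)$ with $f(\bsx)={\rm e}^{-\bsx^{\rm T}\bsx/2}$ and $h(\bsy)=\Gamma^{-1/2}(\boldsymbol\delta-\mathcal O(\bsy))$, applies a single multivariate recursive Fa\`a di Bruno formula with coefficients $\kappa_{\bsnu,\boldsymbol\lambda}$, uses the tensorized Cram\'er bound $|\partial_{\bsx}^{\boldsymbol\lambda}{\rm e}^{-\bsx^{\rm T}\bsx/2}|\leq 1.1^M\sqrt{\boldsymbol\lambda!}$, and then sums over the outer multi-index $\boldsymbol\lambda$ using $\sum_{\lambda\geq 0}(\lambda!)^{-1/2}\leq 3.47$ and the identity $\sum_{\ell=1}^{\nu}\frac{1}{(\nu-\ell)!(\ell-1)!}=\frac{2^{\nu-1}}{(\nu-1)!}$; the constants $3.82^M=1.1^M\cdot 3.47^M$ and $2^{\sigma|\bsnu|-\sigma}$ come out of that single joint sum.

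The concrete gap in your route is the final Leibniz recombination. After grouping the $M$-fold Leibniz sum by the orders $\ell_k=|\bsmu^{(k)}|$, you are left with
\begin{equation*}
\sum_{\ell_1+\cdots+\ell_M=|\bsnu|}\binom{|\bsnu|}{\ell_1,\ldots,\ell_M}\prod_{k=1}^M(\ell_k!)^{\sigma}=|\bsnu|!\sum_{\ell_1+\cdots+\ell_M=|\bsnu|}\prod_{k=1}^M(\ell_k!)^{\sigma-1},
\end{equation*}
and while each summand is at most $(|\bsnu|!)^{\sigma-1}$, the number of compositions is $\binom{|\bsnu|+M-1}{M-1}$, which grows with $|\bsnu|$ (already for $\sigma=1$ the sum equals $|\bsnu|!\binom{|\bsnu|+M-1}{M-1}$). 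This surplus cannot be absorbed into an $M$-dependent constant such as $3.82^M$; absorbing it costs an extra factor of essentially $2^{|\bsnu|}$, so your argument delivers a bound of the same Gevrey form but with a base strictly larger than $2^\sigma C_u$ (equivalently, a larger $\beta_j$). That is harmless for Theorem~\ref{thm:qmcerror}, which only needs the form $((|\bsnu|+1)!)^\sigma\boldsymbol\beta^{\bsnu}$ with summable $\boldsymbol\beta$, but it does not reproduce the stated constants. The reason the paper's version avoids this loss is that the bound~\eqref{eq:kappa} tracks the joint factorial structure $\frac{|\bsnu|!(|\bsnu|-1)!}{\boldsymbol\lambda!\,(|\bsnu|-|\boldsymbol\lambda|)!\,(|\boldsymbol\lambda|-1)!}$ across all $M$ components simultaneously, so the sum over $\boldsymbol\lambda$ collapses exactly; your per-factor bounds discard that coupling before recombining. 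The rest of your sketch (the identification $|\partial_{\bsy}^{\bsmu}g_k|\leq\|\partial_{\bsy}^{\bsmu}\mathcal O(\bsy)\|_{\mathbb R^M}$, the Hermite/Cram\'er taming of the unbounded $g_k$, the source of $\mu_{\min}^{-|\bsnu|/2}$) is sound and consistent with what the cited lemma actually does.
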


\begin{proof}
We write
$$
\pi(\boldsymbol\delta\mid \bsy)=(f\circ h)(\bsy),
$$
where $f(\bsx)={\rm e}^{-\bsx^{\rm T}\bsx/2}$ and $h(\bsy)=\Gamma^{-1/2}(\boldsymbol \delta-\mathcal O(\bsy))$. By Cram\'er's inequality~\cite{abramowitz1964}
$$
\bigg|\frac{{\rm d}^k}{{\rm d}x^k}{\rm e}^{-x^2/2}\bigg|\leq 1.1 \sqrt{k!}\quad\text{for all}~x\in \mathbb R~\text{and}~k\in\mathbb N_0,
$$
there holds
\begin{align}\label{eq:cramer}
|\partial_{\bsx}^{\bsnu}{\rm e}^{-\bsx^{\rm T}\bsx/2}|\leq 1.1^M \sqrt{\bsnu!}\quad\text{for all}~x\in \mathbb R^M~\text{and}~k\in\mathbb N_0^M.
\end{align}
By the recursive Fa\`a di Bruno formula (cf.~\cite{savits}), we can write
\begin{align}\label{eq:fdb}
\partial_{\bsy}^{\bsnu}\pi(\boldsymbol\delta\mid \bsy)=\sum_{\substack{\boldsymbol\lambda\in\mathbb N_0^M\\ 1\leq |\boldsymbol\lambda|\leq |\bsnu|}}\partial_{\bsx}^{\boldsymbol\lambda}{\rm e}^{-\bsx^{\rm T}\bsx/2}\bigg|_{\bsx=\Gamma^{-1/2}(\bsy-\mathcal O(\bsy))}\kappa_{\bsnu,\boldsymbol\lambda}(\bsy),
\end{align}
where the sequence $(\kappa_{\bsnu,\boldsymbol\lambda}(\bsy))$ is defined recursively by setting 
\begin{align*}
&\kappa_{\bsnu,\mathbf 0}\equiv \delta_{\bsnu,\mathbf 0},\\
&\kappa_{\bsnu,\boldsymbol\lambda}\equiv 0\quad\text{if}~|\bsnu|<|\boldsymbol\lambda|~\text{or}~\boldsymbol\lambda\not\geq \mathbf 0~\text{(i.e., if $\boldsymbol\lambda$ contains negative entries),}\\
&\kappa_{\bsnu+\boldsymbol e_j,\boldsymbol\lambda}(\bsy)=\sum_{\ell\in{\rm supp}(\boldsymbol\lambda)}\sum_{\mathbf 0\leq\boldsymbol m\leq \bsnu}\binom{\bsnu}{\boldsymbol m}\partial_{\bsy}^{\boldsymbol m+\boldsymbol e_j}[h(\boldsymbol y)]_\ell \kappa_{\bsnu-\boldsymbol m,\boldsymbol\lambda-\boldsymbol e_\ell}(\bsy)\quad\text{otherwise}.
\end{align*}
Completely analogously to the proof of~\cite[Lemma~5.3]{ks24}, we obtain
\begin{align}\label{eq:kappa}
|\kappa_{\bsnu,\boldsymbol\lambda}(\bsy)|\leq \bigg(\frac{C_u}{\sqrt{\mu_{\min}}}\bigg)^{|\bsnu|}\bigg(\frac{|\bsnu|!(|\bsnu|-1)!}{\boldsymbol \lambda!(|\bsnu|-|\boldsymbol \lambda|)!(|\boldsymbol \lambda|-1)!}\bigg)^{\sigma}\bsb^{\bsnu}
\end{align}
for all $1\leq|\boldsymbol\lambda|\leq |\bsnu|$ and $\bsy\in(-1/2,1/2)^s$. Plugging the inequalities~\eqref{eq:cramer} and~\eqref{eq:kappa} into the expression~\eqref{eq:fdb} yields
\begin{align*}
|\partial_{\bsy}^{\bsnu}\pi(\boldsymbol\delta\mid \bsy)|&\leq 1.1^M\bigg(\frac{C_u}{\sqrt{\mu_{\min}}}\bigg)^{|\bsnu|}(|\bsnu|!(|\bsnu|-1)!)^{\sigma}\boldsymbol b^{\bsnu}\!\!\sum_{\substack{\boldsymbol\lambda\in\mathbb N_0^M\\ 1\leq |\boldsymbol\lambda|\leq |\bsnu|}}\!\bigg(\frac{1}{\sqrt{\boldsymbol\lambda!}(|\bsnu|-|\boldsymbol\lambda|)!(|\boldsymbol\lambda|-1)!}\bigg)^\sigma.
\end{align*}
It remains to estimate the multi-index sum:
\begin{align*}
&\sum_{\substack{1\leq |\boldsymbol\lambda|\leq |\bsnu|\\ \boldsymbol\lambda\in\mathbb N_0^M}}\bigg(\frac{1}{\sqrt{\boldsymbol\lambda!}(|\bsnu|-|\boldsymbol\lambda|)!(|\boldsymbol\lambda|-1)!}\bigg)^{\sigma} =\sum_{\ell=1}^{|\bsnu|}\bigg(\frac{1}{(|\bsnu|-\ell)!(\ell-1)!}\bigg)^{\sigma}\sum_{\substack{\boldsymbol\lambda\in\mathbb N_0^M\\ |\boldsymbol\lambda|=\ell}}\bigg(\frac{1}{\sqrt{\boldsymbol\lambda!}}\bigg)^{\sigma}\\
&\leq \bigg(\sum_{\ell=1}^{|\bsnu|}\bigg(\frac{1}{(|\bsnu|-\ell)!(\ell-1)!}\bigg)^\sigma\bigg) \bigg(\underset{=3.469506}{\underbrace{\sum_{\lambda=0}^\infty\frac{1}{\sqrt{\lambda!}}}}\bigg)^M\leq 3.47^M\bigg(\sum_{\ell=1}^{|\bsnu|}\frac{1}{(|\bsnu|-\ell)!(\ell-1)!}\bigg)^\sigma\\
&= 3.47^M\cdot \frac{2^{\sigma |\bsnu|-\sigma }}{((|\bsnu|-1)!)^\sigma }, 
\end{align*}
where we made use of $\sum_k a_k\leq \big(\sum_k a_k^{1/\sigma}\big)^{\sigma}$ for $a_k\geq0$ and $\sigma\geq1$ (cf.~\cite[Theorem~19]{hardy})  as well as the summation identity $\sum_{\ell=1}^\nu \frac{1}{(\nu-\ell)!(\ell-1)!}=\frac{2^{\nu-1}}{(\nu-1)!}$ (see~\cite[Lemma~A.1]{ks24}).
\end{proof}

For the parametric regularity of the term $Z'(\boldsymbol \delta)$, we obtain the following result.
\begin{lemma}\label{lemma:deriv} Let the assumptions of Lemma~\ref{lemma1} hold. Let $\bsnu\in\mathbb N_0^s$ and $\bsy\in(-1/2,1/2)^s$. Then
$$
|\partial_{\bsy}^{\bsnu} (a(\bsx, \bsy) \pi(\boldsymbol\delta\mid \bsy))|\leq \frac{C_a}{2^\sigma} \cdot 3.82^M((|\bsnu|+1)!)^\sigma\boldsymbol\beta^{\bsnu},
$$
where $0<\mu_{\min}\leq 1$ is a lower bound on the smallest eigenvalue of $\Gamma$ and we define the sequence $\boldsymbol\beta=(\beta_j)_{j\geq 1}$ by setting
\begin{align}\label{eq:betadef}
\beta_j:=\max\{1,2^\sigma C_{\rm init}\}\bigg(1+\frac{C_a}{\min\{a_{\min},\varsigma_{+}^{-1}\}}\bigg)\mu_{\min}^{-\frac{1}{2}}\rho_j.
\end{align}
\end{lemma}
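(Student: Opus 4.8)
The plan is to expand the product $a(\bsx,\bsy)\,\pi(\boldsymbol\delta\mid\bsy)$ by the Leibniz rule, bound the two resulting families of factors using Assumption~\ref{a2} for the derivatives of $a$ and Lemma~\ref{lemma:paramregnorm} for the derivatives of the likelihood, absorb the $\bsy$-independent algebraic prefactors into a single power $\boldsymbol\beta^{\bsnu}$, and finish with a purely combinatorial estimate. For the first step, the Leibniz rule gives
\begin{align*}
\partial_{\bsy}^{\bsnu}\big(a(\bsx,\bsy)\pi(\boldsymbol\delta\mid\bsy)\big)=\sum_{\boldsymbol m\leq\bsnu}\binom{\bsnu}{\boldsymbol m}\,\partial_{\bsy}^{\boldsymbol m}a(\bsx,\bsy)\;\partial_{\bsy}^{\bsnu-\boldsymbol m}\pi(\boldsymbol\delta\mid\bsy),
\end{align*}
and inserting $\|\partial_{\bsy}^{\boldsymbol m}a(\cdot,\bsy)\|_{L^\infty(D)}\leq C_a(|\boldsymbol m|!)^\sigma\boldsymbol\rho^{\boldsymbol m}$ together with the bound of Lemma~\ref{lemma:paramregnorm} applied to the multi-index $\bsnu-\boldsymbol m$ yields
\begin{align*}
\big|\partial_{\bsy}^{\bsnu}\big(a(\bsx,\bsy)\pi(\boldsymbol\delta\mid\bsy)\big)\big|\leq\frac{C_a}{2^\sigma}\,3.82^M\sum_{\boldsymbol m\leq\bsnu}\binom{\bsnu}{\boldsymbol m}(|\boldsymbol m|!)^\sigma(|\bsnu-\boldsymbol m|!)^\sigma\,\boldsymbol\rho^{\boldsymbol m}\boldsymbol b^{\bsnu-\boldsymbol m}\big(2^\sigma C_u\mu_{\min}^{-1/2}\big)^{|\bsnu-\boldsymbol m|}.
\end{align*}

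Next I would collapse the algebraic prefactors into $\boldsymbol\beta^{\bsnu}$. Since $\rho_j\leq b_j$ for every $j$ — the multiplier $1+C_a/\min\{a_{\min},\varsigma_+^{-1}\}$ appearing in Lemma~\ref{lemma:inductive} being $\geq1$ — we have $\boldsymbol\rho^{\boldsymbol m}\boldsymbol b^{\bsnu-\boldsymbol m}\leq\boldsymbol b^{\bsnu}$. Moreover $0<\mu_{\min}\leq1$, so the base $\max\{1,2^\sigma C_u\}\mu_{\min}^{-1/2}$ is $\geq1$ and therefore $(2^\sigma C_u\mu_{\min}^{-1/2})^{|\bsnu-\boldsymbol m|}\leq(\max\{1,2^\sigma C_u\}\mu_{\min}^{-1/2})^{|\bsnu|}$. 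With $C_{\rm init}=C_u$ in~\eqref{eq:betadef}, the product $\boldsymbol b^{\bsnu}(\max\{1,2^\sigma C_u\}\mu_{\min}^{-1/2})^{|\bsnu|}$ is exactly $\boldsymbol\beta^{\bsnu}$, which I can then pull out of the sum.

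It remains to prove the combinatorial estimate $\sum_{\boldsymbol m\leq\bsnu}\binom{\bsnu}{\boldsymbol m}(|\boldsymbol m|!)^\sigma(|\bsnu-\boldsymbol m|!)^\sigma\leq((|\bsnu|+1)!)^\sigma$. Grouping the multi-indices by total order $|\boldsymbol m|=k$ and using the Vandermonde-type identity $\sum_{\boldsymbol m\leq\bsnu,\,|\boldsymbol m|=k}\binom{\bsnu}{\boldsymbol m}=\binom{|\bsnu|}{k}$, the left-hand side reduces to $|\bsnu|!\sum_{k=0}^{|\bsnu|}(k!)^{\sigma-1}((|\bsnu|-k)!)^{\sigma-1}$. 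Since $k!(|\bsnu|-k)!\leq|\bsnu|!$ and $\sigma-1\geq0$, every summand is at most $(|\bsnu|!)^{\sigma-1}$, so the sum is at most $(|\bsnu|+1)(|\bsnu|!)^\sigma\leq(|\bsnu|+1)^\sigma(|\bsnu|!)^\sigma=((|\bsnu|+1)!)^\sigma$, using $\sigma\geq1$ once more. Combining the three displays gives the asserted bound.

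I expect the delicate point to be the constant bookkeeping in the middle step: ensuring that the $|\bsnu-\boldsymbol m|$-dependent geometric factor produced by Lemma~\ref{lemma:paramregnorm} can be uniformly dominated by an $|\bsnu|$-th power, and that $\boldsymbol\rho^{\boldsymbol m}$ and $\boldsymbol b^{\bsnu-\boldsymbol m}$ merge cleanly into $\boldsymbol\beta^{\bsnu}$; this is precisely why the sequence in~\eqref{eq:betadef} carries the extra factor $\max\{1,2^\sigma C_{\rm init}\}\mu_{\min}^{-1/2}$. The combinatorial lemma, by contrast, is a routine consequence of the multinomial Vandermonde identity and submultiplicativity of factorials, and the $(|\bsnu|+1)!$ in the statement is the natural price for summing $|\bsnu|+1$ terms of size $(|\bsnu|!)^\sigma$.
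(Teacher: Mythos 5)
Your proposal is correct and follows essentially the same route as the paper: Leibniz expansion of the product, insertion of Assumption~(A2) for $\partial_{\bsy}^{\boldsymbol m}a$ and of Lemma~\ref{lemma:paramregnorm} for $\partial_{\bsy}^{\bsnu-\boldsymbol m}\pi(\boldsymbol\delta\mid\bsy)$, absorption of the geometric prefactors and of $\boldsymbol\rho^{\boldsymbol m}\boldsymbol b^{\bsnu-\boldsymbol m}$ into $\boldsymbol\beta^{\bsnu}$ (with $C_{\rm init}=C_u$, as the paper implicitly intends), and finally the identical combinatorial estimate $\sum_{\boldsymbol m\le\bsnu}\binom{\bsnu}{\boldsymbol m}(|\boldsymbol m|!)^\sigma((|\bsnu|-|\boldsymbol m|)!)^\sigma\le((|\bsnu|+1)!)^\sigma$ via the Vandermonde identity and $\ell!(|\bsnu|-\ell)!\le|\bsnu|!$. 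Your write-up is in fact slightly more explicit than the paper's about how the $|\bsnu-\boldsymbol m|$-dependent factor is dominated by an $|\bsnu|$-th power, but the argument is the same.
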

\begin{proof}  Let $\bsnu\in\mathscr F\setminus\{\mathbf 0\}$. The Leibniz product rule yields
\begin{align*}
&\partial_{\bsy}^{\bsnu}\big(a(\bsx,\bsy){\rm e}^{-\frac12 (\boldsymbol\delta - \mathcal O(\bsy))^{\rm T}\Gamma^{-1}(\boldsymbol \delta - \mathcal O(\bsy))}\big) \\
=\,&\sum_{\boldsymbol m\leq \bsnu}\binom{\bsnu}{\boldsymbol m}\partial_{\bsy}^{\boldsymbol m}a(\bsx,\bsy)\partial_{\bsy}^{\bsnu-\boldsymbol m}{\rm e}^{-\frac12 (\boldsymbol \delta - \mathcal O(\bsy))^{\rm T}\Gamma^{-1}(\boldsymbol\delta-\mathcal O(\bsy))}\\
\leq\,& \frac{1}{2^\sigma} \cdot 3.82^MC_a\bigg(\max\{1,2^{\sigma}C_{\rm init}\}\bigg(1+\frac{C_a}{\min\{a_{\min},\varsigma_{+}^{-1}\}}\bigg)\mu_{\min}^{-\frac{1}{2}}\bigg)^{|\bsnu|}\boldsymbol\rho^{\bsnu}\\
&\quad\times\sum_{\boldsymbol m\leq \bsnu}\binom{\bsnu}{\boldsymbol m}(|\boldsymbol m|!)^{\sigma}((|\bsnu|-|\boldsymbol m|)!)^{\sigma}.
\end{align*}
The claim follows by noting that
\begin{align*}
&\sum_{\boldsymbol m\leq \bsnu}\binom{\bsnu}{\boldsymbol m}(|\boldsymbol m|!)^{\sigma}((|\bsnu|-|\boldsymbol m|)!)^{\sigma}=\sum_{\ell=0}^{|\bsnu|}(\ell!)^\sigma((|\bsnu|-\ell)!)^\sigma\sum_{\substack{\boldsymbol m\leq \bsnu\\ |\boldsymbol m|=\ell}}\binom{\bsnu}{\boldsymbol m}\\
&=\sum_{\ell=0}^{|\bsnu|}(\ell!)^\sigma((|\bsnu|-\ell)!)^\sigma\binom{|\bsnu|}{\ell}=|\bsnu|!\sum_{\ell=0}^{|\bsnu|}(\ell!)^{\sigma-1}((|\bsnu|-\ell)!)^{\sigma-1}\\
&\leq |\bsnu|!\sum_{\ell=0}^{|\bsnu|}(|\bsnu|!)^{\sigma-1}=(|\bsnu|!)^{\sigma}(|\bsnu|+1)\leq ((|\bsnu|+1)!)^{\sigma},
\end{align*}
where we used the fact that $\ell!(v-\ell)!\leq v!$ for natural numbers $v\geq \ell$ and that $x\mapsto x^{\sigma-1}$ is monotonically increasing for $\sigma\geq 1$ and $x>0$.
\end{proof}

\section{Quasi-Monte Carlo}\label{sec:qmc}
Let $F\!:[0,1]^s\to\mathbb R$ be a continuous function. We shall be interested in approximating integral quantities
$$
I_s(F):=\int_{[0,1]^s}F(\bsy)\,{\rm d}\bsy.
$$
The randomly shifted quasi-Monte Carlo QMC estimator of $I_s(F)$ is given by
$$
Q_{n, s}(F):=\frac{1}{nR}\sum_{r=1}^R \sum_{k=1}^n F(\{\bst_k+\boldsymbol\Delta^{(r)}\}),
$$
where $\boldsymbol\Delta^{(1)},\ldots,\boldsymbol\Delta^{(R)}$ are i.i.d.~random shifts drawn from $\mathcal U([0,1]^s)$, $\{\cdot\}$ denotes the componentwise fractional part, the cubature nodes are given by
$$
\bst_k:=\bigg\{\frac{k\boldsymbol z}{n}\bigg\},\quad k\in\{1,\ldots,n\},
$$
and $\boldsymbol z\in\{1,\ldots,n-1\}^s$ is called the {\em generating vector}.

Let $\boldsymbol\gamma=(\gamma_{\setu})_{\setu\subseteq\{1,\ldots,s\}}$ be a sequence of positive weights. We assume that the integrand $F$ belongs to a weighted Sobolev space with bounded first-order mixed partial derivatives, $\mathcal W_{s,\boldsymbol\gamma}$, the norm of which is given by
$$
\|F\|_{s,\bsgamma}:=\bigg(\sum_{\setu\subseteq\{1,\ldots,s\}}\frac{1}{\gamma_{\setu}}\int_{[0,1]^{|\setu|}}\bigg(\int_{[0,1]^{s-|\setu|}}\frac{\partial^{|\setu|}}{\partial\bsy_{\setu}}F(\bsy)\,{\rm d}\bsy_{-\setu}\bigg)^2\,{\rm d}\bsy_{\setu}\bigg)^{\frac12},
$$
where ${\rm d}\bsy_{\setu}:=\prod_{j\in\setu}{\rm d}y_j$ and ${\rm d}\bsy_{-\setu}:=\prod_{j\in\{1,\ldots,s\}\setminus\setu}{\rm d}y_j$.

The following well-known result shows that it is possible to construct generating vectors using a \emph{component-by-component (CBC)} algorithm~\cite{cbc1,actanumer,cbc2} satisfying rigorous error bounds.
\begin{theorem}[{cf.~\cite[Theorem~5.1]{kuonuyenssurvey}}]\label{thm:cbcerr}
Let $F$ belong to the weighted Sobolev space $\mathcal W_{s,\bsgamma}$ with weights $\bsgamma=(\gamma_{\setu})_{\setu\subseteq\{1,\ldots,s\}}$. A randomly shifted lattice rule with $n=2^m$ points, $m\geq 1$, in $s$ dimensions can be constructed by a CBC algorithm such that for $R$ independent random shifts and for all $\lambda\in(1/2,1]$, there holds
$$
\sqrt{\mathbb E_{\boldsymbol\Delta}|I_s(F)-Q_{n, s}(F)|^2}\leq \frac{1}{\sqrt R}\bigg(\frac{2}{n}\sum_{\varnothing\neq\setu\subseteq\{1,\ldots,s\}}\gamma_{\setu}^{\lambda}\bigg(\frac{2\zeta(2\lambda)}{(2\pi^2)^\lambda}\bigg)^{|\setu|}\bigg)^{\frac{1}{2\lambda}}\|F\|_{s,\bsgamma},
$$
where $\mathbb E_{\boldsymbol\Delta}$ denotes the expected value with respect to the uniformly distributed random shifts over $[0,1]^s$ and $\zeta(x):=\sum_{k=1}^\infty k^{-x}$ is the Riemann zeta function for $x>1$.
\end{theorem}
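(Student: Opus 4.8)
The plan is to follow the classical reproducing-kernel analysis of randomly shifted rank-1 lattice rules, followed by the component-by-component (CBC) averaging argument. The weighted Sobolev space $\calW_{s,\bsgamma}$ with the norm displayed above is a reproducing kernel Hilbert space, and for a single uniformly distributed random shift $\bsDelta$ the mean-square error of a lattice rule factors as
$$
\bbE_{\bsDelta}\big|I_s(F)-Q^{[1]}_{n,s}(F)\big|^2 \;\le\; \big[e^{\rm sh}_{n,s}(\bsz;\bsgamma)\big]^2\,\|F\|_{s,\bsgamma}^2,
$$
where $Q^{[1]}_{n,s}$ denotes the one-shift estimator and the \emph{shift-averaged worst-case error} has the closed form
$$
\big[e^{\rm sh}_{n,s}(\bsz;\bsgamma)\big]^2 \;=\; \sum_{\varnothing\neq\setu\subseteq\{1,\ldots,s\}} \gamma_{\setu}\,\frac1n\sum_{k=1}^{n}\prod_{j\in\setu} B_2\!\left(\left\{\frac{k z_j}{n}\right\}\right),\qquad B_2(x)=x^2-x+\tfrac16 .
$$
Because the shifts $\bsDelta^{(1)},\ldots,\bsDelta^{(R)}$ are i.i.d.\ and each one-shift estimator is unbiased, the variance of the $R$-shift average is $1/R$ times that of a single shift, so $\bbE_{\bsDelta}|I_s(F)-Q_{n,s}(F)|^2 \le \tfrac1R [e^{\rm sh}_{n,s}(\bsz;\bsgamma)]^2\|F\|_{s,\bsgamma}^2$. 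Taking square roots, the theorem reduces to constructing $\bsz$ with $[e^{\rm sh}_{n,s}(\bsz;\bsgamma)]^2 \le \big(\tfrac2n\sum_{\varnothing\neq\setu}\gamma_{\setu}^\lambda\,(2\zeta(2\lambda)/(2\pi^2)^\lambda)^{|\setu|}\big)^{1/\lambda}$ for every $\lambda\in(1/2,1]$.

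I would then run the CBC construction by induction on the dimension $d=1,\ldots,s$, choosing $z_d$ among the $n/2$ odd residues (the units modulo $n=2^m$) so as to minimize the part of $[e^{\rm sh}_{n,d}(z_1,\ldots,z_d;\bsgamma)]^2$ that depends on $z_d$, namely $\Theta_d(z_d):=\sum_{\setu\ni d}\gamma_{\setu}\tfrac1n\sum_{k=1}^n\prod_{j\in\setu}B_2(\{kz_j/n\})$; the remainder $\theta_{d-1}$ (depending only on $z_1,\ldots,z_{d-1}$) is handled by the induction hypothesis. Bounding the minimum of $\Theta_d$ by its average over the $n/2$ candidates, raising $\theta_{d-1}+\Theta_d(z_d)$ to the power $\lambda\in(1/2,1]$, and invoking $(a+b)^\lambda\le a^\lambda+b^\lambda$, the inductive step comes down to the one-dimensional lattice estimate below; extracting the $\lambda$-th root then closes the induction with the advertised constants.

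The technical heart is that one-dimensional estimate, specialized to $n=2^m$. Using $B_2(\{x\})=\sum_{0\neq h\in\bbZ}(2\pi^2h^2)^{-1}\exp(2\pi\ri hx)$ together with the orthogonality identity $\tfrac1n\sum_{k=1}^n\exp(2\pi\ri kh/n)=\mathbf{1}[n\mid h]$ shows that $\tfrac1n\sum_{k=1}^n\prod_{j\in\setu}B_2(\{kz_j/n\})=\sum_{\boldsymbol h}\prod_{j\in\setu}(2\pi^2h_j^2)^{-1}$, the sum running over all $\boldsymbol h=(h_j)_{j\in\setu}$ with $h_j\in\bbZ\setminus\{0\}$ and $\sum_{j\in\setu}h_jz_j\equiv0\ (\mathrm{mod}\ n)$; in particular this quantity is nonnegative. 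Applying $(\sum_i a_i)^\lambda\le\sum_i a_i^\lambda$ for $a_i\ge0$ and $\lambda\in(0,1]$, exchanging the order of summation with the average over $z_d$, and counting the residues $z_d$ that solve the congruence for each fixed frequency vector, one obtains the per-coordinate factor $2\zeta(2\lambda)/(2\pi^2)^\lambda=\sum_{0\neq h\in\bbZ}(2\pi^2|h|^2)^{-\lambda}$; the prefactor $2/n$ appears because the average is taken over the $\varphi(2^m)=n/2$ units modulo $n$. Summing over the subsets containing $d$ and then over $d=1,\ldots,s$ yields the claimed bound on $[e^{\rm sh}_{n,s}(\bsz;\bsgamma)]^2$, and hence the theorem. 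The main obstacle is precisely this prime-power lattice estimate—the residue-counting for non-prime $n$ and the verification that the greedy CBC choice always attains the averaged bound at every step; everything else is the standard reproducing-kernel machinery.
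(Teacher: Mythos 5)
The paper does not prove this theorem at all --- it is quoted verbatim (as the ``cf.'' indicates) from the cited survey of Kuo and Nuyens, Theorem 5.1. Your sketch is a correct outline of the standard argument underlying that cited result: the shift-averaged worst-case error in the reproducing kernel Hilbert space $\mathcal W_{s,\bsgamma}$, the $1/R$ variance reduction from i.i.d.\ shifts, the CBC induction with the minimum-bounded-by-average step over the $\varphi(2^m)=n/2$ units, Jensen's inequality $(a+b)^\lambda\le a^\lambda+b^\lambda$, and the dual-lattice character-sum computation producing the factor $2\zeta(2\lambda)/(2\pi^2)^\lambda$ per coordinate --- so it matches the source the paper relies on.
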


In what follows, we shall consider the QMC approximation of~\eqref{eq:ahatdef}. To this end, we let $\boldsymbol\Delta\in [0,1]^s$ denote a (random) shift and define
\begin{align*}
&Z_n'(\boldsymbol x,\boldsymbol \delta,\boldsymbol\Delta):=\frac{1}{n}\sum_{k=1}^n a(\bsx,\{\bst_k+\boldsymbol\Delta\}-\tfrac{\mathbf 1}{\mathbf 2})\pi(\boldsymbol\delta\mid \{\bst_k+\boldsymbol\Delta\}-\tfrac{\mathbf 1}{\mathbf 2}),\\
&Z_n(\boldsymbol \delta,\boldsymbol\Delta):=\frac{1}{n}\sum_{k=1}^n\pi(\boldsymbol\delta\mid \{\bst_k+\boldsymbol\Delta\}-\tfrac{\mathbf 1}{\mathbf 2}).
\end{align*}
We suppress the arguments and write $Z_n'=Z_n'(\boldsymbol x,\boldsymbol\delta,\boldsymbol\Delta)$ and $Z_n=Z_n(\boldsymbol\delta,\boldsymbol\Delta)$ whenever there is no risk of ambiguity.

Furthermore, we shall also be interested in the finite element approximations of $Z_n'$ and $Z_n$. If $D$ is a convex polyhedron, we can consider a family $\{V_h\}_{h}$ of finite element subspaces of $H^1(D)$, indexed by the mesh width $h>0$, which are spanned by continuous, piecewise linear finite element basis functions in such a way that each $V_h$ is obtained from an initial, regular triangulation of $D$ by recursive, uniform bisection of simplices. We denote by $Z_{n,h}'(\boldsymbol\delta,\boldsymbol\Delta)$ and $Z_{n,h}(\boldsymbol\delta,\boldsymbol\Delta)$ the corresponding quantities $Z_n'(\boldsymbol x,\boldsymbol\delta,\boldsymbol\Delta)$ and $Z_n(\boldsymbol\delta,\boldsymbol\Delta)$ when $(u(\cdot,\bsy),U(\bsy))\in\mathcal H$ is replaced by a finite element solution.

\section{Error analysis}\label{sec:error}
We proceed to prove a rigorous convergence rate for the QMC approximation of our quantity of interest. The following theorem showcases a suitable choice of {\em product and order dependent} (POD) weights.

\begin{theorem}\label{thm:qmcerror}
    Let the assumptions of Lemma~\ref{lemma1} hold. Suppose that $\boldsymbol \rho=(\rho_j)_{j\geq1}\in\ell^p(\mathbb N)$ for some $p\in(0,1)$. Then the root-mean-square error using a randomly shifted lattice rule with $n=2^m$ points, $m\geq 1$, obtained by a CBC algorithm with $R$ independent randoms shifts satisfies
        $$
        \sqrt{\mathbb E_{\boldsymbol\Delta}\bigg|\frac{Z'}{Z}-\frac{Z_n'}{Z_n}\bigg|^2}\leq C\cdot n^{-\min\{\frac{1}{p}-\frac12,1\}},
        $$
        where $C>0$ is independent of $s$ for the sequence of POD weights 
        \begin{align}\label{eq:podweights}
        &\gamma_{\mathfrak u}=\bigg(((|\mathfrak u|+1)!)^\sigma\prod_{j\in\mathfrak u}\frac{\beta_j}{\sqrt{2\zeta(2\lambda)/(2\pi^2)^\lambda}}\bigg)^{\frac{2}{1+\lambda}},\\
        &\lambda=\begin{cases}\frac{p}{2-p}&\text{if}~p\in(\frac23,\frac{1}{\sigma})\\
        \frac{1}{2-2\varepsilon}&\text{if}~p\in(0,\min\{\frac23,{\frac{1}{\sigma}}\}),~p\neq\frac{1}{\sigma},\end{cases} \notag
        \end{align}
        where $\varepsilon\in(0,1/2)$ is arbitrary and $\beta_j$ is defined by~\eqref{eq:betadef}.
\end{theorem}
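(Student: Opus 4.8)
The plan is to reduce the ratio error $|Z'/Z - Z'_n/Z_n|$ to a combination of QMC integration errors for the two separate integrands $a(\boldsymbol x,\cdot)\pi(\boldsymbol\delta\mid\cdot)$ and $\pi(\boldsymbol\delta\mid\cdot)$, then apply Theorem~4.1 together with the parametric regularity bounds from Lemmas~3.5 and~3.7. First I would write
$$
\frac{Z'}{Z}-\frac{Z'_n}{Z_n}=\frac{Z'-Z'_n}{Z}+\frac{Z'_n}{Z}\cdot\frac{Z_n-Z}{Z_n},
$$
and observe that $Z,Z_n\geq c>0$ uniformly in the shift $\boldsymbol\Delta$ (the likelihood $\pi(\boldsymbol\delta\mid\bsy)$ is continuous and strictly positive on the compact cube, hence bounded below by a positive constant, and $Z_n$ is an average of such values), while $Z'_n$ is bounded above uniformly. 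Taking the $L^2_{\boldsymbol\Delta}$ norm and using the triangle inequality, one bounds the root-mean-square error by a constant times $\sqrt{\mathbb E_{\boldsymbol\Delta}|Z'-Z'_n|^2}+\sqrt{\mathbb E_{\boldsymbol\Delta}|Z-Z_n|^2}$, i.e. by the two QMC errors for $F_1(\bsy)=a(\bsx,\bsy-\tfrac12\boldsymbol 1)\pi(\boldsymbol\delta\mid\bsy-\tfrac12\boldsymbol 1)$ and $F_2(\bsy)=\pi(\boldsymbol\delta\mid\bsy-\tfrac12\boldsymbol 1)$ on $[0,1]^s$.

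Next I would invoke Theorem~4.1 for each integrand and bound the weighted Sobolev norms $\|F_1\|_{s,\bsgamma}$ and $\|F_2\|_{s,\bsgamma}$. The standard computation expresses $\|F\|_{s,\bsgamma}^2$ as $\sum_{\setu}\gamma_{\setu}^{-1}\big(\int_{[0,1]^{s-|\setu|}}\int_{[0,1]^{|\setu|}}\partial^{\setu}_{\bsy}F\,\mathrm d\bsy_{\setu}\,\mathrm d\bsy_{-\setu}\big)^{2}$, and using Lemma~3.7 (which dominates Lemma~3.5 up to constants, since $\boldsymbol\beta\geq\boldsymbol b\,\mu_{\min}^{-1/2}$ and $((|\bsnu|+1)!)^\sigma$ dominates $(|\bsnu|!)^\sigma$), one gets $|\partial^{\setu}_{\bsy}F_i(\bsy)|\leq (C_a/2^\sigma)3.82^M((|\setu|+1)!)^\sigma\bsbeta^{\setu_{\mathbf 1}}$ where $\setu_{\mathbf 1}$ denotes the multi-index with $1$'s on $\setu$. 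This yields
$$
\|F_i\|_{s,\bsgamma}\leq C\cdot 3.82^M\bigg(\sum_{\setu\subseteq\{1,\ldots,s\}}\frac{((|\setu|+1)!)^{2\sigma}\prod_{j\in\setu}\beta_j^2}{\gamma_{\setu}}\bigg)^{1/2}.
$$
Combining with the Theorem~4.1 bound, the RMS error is at most $C n^{-1/(2\lambda)}$ times
$$
\bigg(\sum_{\varnothing\neq\setu}\gamma_{\setu}^\lambda\Big(\tfrac{2\zeta(2\lambda)}{(2\pi^2)^\lambda}\Big)^{|\setu|}\bigg)^{1/(2\lambda)}\bigg(\sum_{\setu}\frac{((|\setu|+1)!)^{2\sigma}\prod_{j\in\setu}\beta_j^2}{\gamma_{\setu}}\bigg)^{1/2}.
$$
The POD weights~\eqref{eq:podweights} are precisely the minimizer of the product of these two factors (the standard Cools--Kuo--Nuyens-type optimization: differentiate in $\gamma_{\setu}$, which gives $\gamma_{\setu}\propto\big(((|\setu|+1)!)^{2\sigma}\prod_{j\in\setu}\beta_j^2\big/(2\zeta(2\lambda)/(2\pi^2)^\lambda)^{|\setu|}\big)^{1/(1+\lambda)}$), and plugging them back makes both factors equal, giving the bound $C\,n^{-1/(2\lambda)}\big(\sum_{\setu}(((|\setu|+1)!)^\sigma\prod_{j\in\setu}\tilde\beta_j)^{2/(1+\lambda)}\big)^{(1+\lambda)/(2\lambda)}$ with $\tilde\beta_j=\beta_j/\sqrt{2\zeta(2\lambda)/(2\pi^2)^\lambda}$.

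The main obstacle — and the heart of the theorem — is showing that this remaining sum over $\setu$ is \emph{finite and bounded independently of $s$} for the stated choices of $\lambda$. Writing $q=2/(1+\lambda)$, one bounds $\sum_{\setu}(((|\setu|+1)!)^\sigma)^q\prod_{j\in\setu}\tilde\beta_j^q\leq\sum_{\ell=0}^\infty((\ell+1)!)^{\sigma q}\tfrac{1}{\ell!}\big(\sum_{j\geq1}\tilde\beta_j^q\big)^\ell$ (after using $\sum_{|\setu|=\ell}\prod_{j\in\setu}c_j\leq\frac{1}{\ell!}(\sum_j c_j)^\ell$). For this series to converge one needs $\sum_j\tilde\beta_j^q<\infty$, i.e. $\boldsymbol\beta\in\ell^q$, which by $\boldsymbol\beta\propto\boldsymbol\rho$ and $\boldsymbol\rho\in\ell^p$ requires $q\geq p$, i.e. $\lambda\leq 2/p-1$; and one needs the factorial growth $((\ell+1)!)^{\sigma q}/\ell!$ to be beaten, which (via Stirling) requires $\sigma q<1$, i.e. $\lambda>2\sigma-1$... more precisely $\sigma q\le 1$ with the borderline handled by the $\varepsilon$. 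Reconciling $p\le q$, $\sigma q<1$ (so $q<1/\sigma$, consistent with $p<1/\sigma$), and $\lambda\in(1/2,1]$ (so that Theorem~4.1 applies and $q\in[1,4/3)$) is exactly what forces the two-case definition of $\lambda$: in the regime $p\in(2/3,1/\sigma)$ one takes $\lambda=p/(2-p)$, the smallest value making $\boldsymbol\beta\in\ell^q$ with $q=p$, which also lies in $(1/2,1]$ and gives rate $n^{-(1/p-1/2)}$; in the regime $p<\min\{2/3,1/\sigma\}$ one is free to take $\lambda$ close to $1/2$, namely $\lambda=1/(2-2\varepsilon)$, giving $q$ close to $4/3>p$ (so summability is automatic) and rate close to $n^{-1}$. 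I would carry out the Stirling estimate carefully to confirm $\sum_\ell((\ell+1)!)^{\sigma q}(\sum_j\tilde\beta_j^q)^\ell/\ell!<\infty$ whenever $\sigma q<1$, and separately record the bound on $\sum_j\beta_j^q$ in terms of $\|\boldsymbol\rho\|_{\ell^p}$ to make the $s$-independence of $C$ explicit, which completes the proof.
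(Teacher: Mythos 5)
Your proposal follows essentially the same route as the paper's proof: the same ratio decomposition with $Z$ and $Z_n$ bounded below via $\inf_{\bsy}\pi(\boldsymbol\delta\mid\bsy)>0$, the same application of Theorem~\ref{thm:cbcerr} with the weighted Sobolev norm controlled by Lemma~\ref{lemma:deriv} (which indeed dominates Lemma~\ref{lemma:paramregnorm}), the same weight optimization \`a la \cite[Lemma~6.2]{kss12}, and the same superposition bound $\sum_{|\setu|=\ell}\prod_{j\in\setu}c_j\le\frac{1}{\ell!}\big(\sum_j c_j\big)^{\ell}$ followed by a ratio test. The only slip is the exponent $q=2/(1+\lambda)$, which should be $q=2\lambda/(1+\lambda)\in(2/3,1]$ for $\lambda\in(1/2,1]$---this is precisely what makes $\lambda=p/(2-p)$ correspond to $q=p$ and explains why the case split occurs at $p=2/3$---but your subsequent conclusions (e.g.\ that $\lambda=p/(2-p)$ yields $q=p$ and the rate $n^{-(1/p-1/2)}$) already use the correct value, so this is a typo rather than a gap.
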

\begin{proof} The ratio estimator satisfies the following bound:
\begin{align}
\bigg|\frac{Z'}{Z}-\frac{Z_n'}{Z_n}\bigg|&=\bigg|\frac{Z'Z_n-Z_n'Z}{ZZ_n}\bigg|=\bigg|\frac{Z'Z_n-Z'Z+Z'Z-Z_n'Z}{ZZ_n}\bigg|\notag\\
&\leq \frac{|Z'||Z-Z_n|}{|ZZ_n|}+\frac{|Z'-Z_n'|}{|Z_n|}\notag\\
&\leq a_{\max}C_{\boldsymbol\delta,\Gamma}^2|Z-Z_n|+C_{\boldsymbol\delta,\Gamma}|Z'-Z_n'|,\label{eq:ratioestim}
\end{align}
where $C_{\boldsymbol\delta,\Gamma}:=\big(\inf_{\bsy\in\Upsilon}{\rm e}^{-\frac12 (\boldsymbol\delta-\mathcal O(\bsy))^{\rm T}\Gamma^{-1}(\boldsymbol\delta - \mathcal O(\bsy))}\big)^{-1}$ is finite. In particular, we deduce that
$$
\mathbb E_{\boldsymbol\Delta}\bigg|\frac{Z'}{Z}-\frac{Z_n'}{Z_n}\bigg|^2\lesssim\mathbb E_{\boldsymbol\Delta}|Z-Z_n'|^2+\mathbb E_{\boldsymbol\Delta}|Z'-Z_n'|^2. 
$$

Since the derivative bound for $Z'$ dominates $Z$, we can focus on designing a QMC rule for $Z'$. To this end, plugging in the derivative bound from Lemma~\ref{lemma:deriv} into the CBC error bound in Theorem~\ref{thm:cbcerr} yields
\begin{align*}
\sqrt{\mathbb E_{\boldsymbol\Delta}|Z'-Z_n'|^2}\lesssim \bigg(\frac{2}{n}\bigg)^{\frac{1}{2\lambda}}C_{s,\boldsymbol\gamma,\lambda}^{\frac12},
\end{align*}
where
$$
C_{s,\boldsymbol\gamma,\lambda}=\bigg(\sum_{\varnothing\neq\mathfrak u\subseteq\{1,\ldots,s\}}\gamma_{\mathfrak u}^{\lambda}\bigg(\frac{2\zeta(2\lambda)}{(2\pi^2)^\lambda}\bigg)^{|\mathfrak u|}\bigg)^{\frac{1}{\lambda}}\sum_{\mathfrak u\subseteq\{1,\ldots,s\}}\frac{((|\setu|+1)!)^{2\sigma}\boldsymbol\beta_{\setu}^2}{\gamma_{\setu}}.
$$
The value of $C_{s,\boldsymbol\gamma,\lambda}$ is minimized by choosing (cf., e.g., \cite[Lemma~6.2]{kss12})
$$
\gamma_{\setu}=\bigg(((|\setu|+1)!)^\sigma\prod_{j\in\setu}\frac{\beta_j}{\sqrt{2\zeta(2\lambda)/(2\pi^2)^{\lambda}}}\bigg)^{\frac{2}{1+\lambda}},\quad \setu\subset\mathbb N,
$$
with the convention that $\gamma_{\varnothing}=1$. Plugging these weights into the expression for $C_{s,\boldsymbol\gamma,\lambda}$ yields
\begin{align*}
C_{s,\boldsymbol\gamma,\lambda}^{\frac{2\lambda}{\lambda+1}}&=\sum_{\setu\subseteq\{1,\ldots,s\}}((|\setu|+1)!)^{\sigma\lambda}\bigg(\frac{2\zeta(2\lambda)}{(2\pi^2)^\lambda}\bigg)^{\frac{1}{\lambda+1}}\prod_{j\in\setu}\beta_j^{\frac{2\lambda}{1+\lambda}}\\
&\leq \sum_{\ell=0}^\infty ((\ell+1)!)^{\sigma\lambda-1}\bigg(\frac{2\zeta(2\lambda)}{(2\pi^2)^\lambda}\bigg)^{\frac{1}{\lambda+1}}\bigg(\sum_{j=1}^\infty b_j^{\frac{2\lambda}{\lambda+1}}\bigg)^\ell=C(\boldsymbol\gamma,\lambda),
\end{align*}
where we used the inequality $\sum_{|\setu|=\ell,~\setu\subseteq\mathbb N}\prod_{j\in\setu}c_j\leq \frac{1}{\ell!}\big(\sum_{j=1}^\infty c_j\big)^\ell$ for $c_j\geq 0$.

It can be shown using the ratio test that the upper bound $C(\lambda,\boldsymbol\gamma)<\infty$ with the following choices:
\begin{itemize}
\item If $p\in(\frac23,\frac{1}{\sigma})$, we choose $\lambda=\frac{2}{2-p}$. This yields the \emph{dimension-independent} QMC convergence rate $O(n^{-\frac{1}{p}+\frac12})$.
\item If $p\in(0,\min\{\frac23,\frac{1}{\sigma}\})$, $p\neq \frac{1}{\sigma}$, we choose $\lambda=\frac{1}{2-2\varepsilon}$ for arbitrary $\varepsilon\in(0,\frac12)$. This yields the \emph{dimension-independent} QMC convergence rate $O(n^{-1+\varepsilon})$.
\end{itemize}
With the above choices for $\boldsymbol\gamma$ and $\lambda$, we  obtain the overall QMC convergence rate $O(n^{\max\{-\frac{1}{p}+\frac12,-1+\varepsilon\}})$.

\end{proof}

The dimension truncation error rate follows from the existing literature.
\begin{theorem}[{\cite[Theorem~4.3]{gk22}}]
Let the assumptions of Lemma~\ref{lemma1} hold. In addition, suppose that $\boldsymbol\rho=(\rho_j)_{j\geq 1}\in \ell^p(\mathbb N)$ for some $0<p<1$, $\rho_1\geq \rho_2\geq \cdots$, and define
$$
Z_\infty':=\lim_{s\to\infty}\int_{(-\frac12,\frac12)^s}a(\bsx,\bsy)\pi(\boldsymbol\delta\mid \bsy)\,{\rm d}\bsy,\quad Z_\infty:=\lim_{s\to\infty}\int_{(-\frac12,\frac12)^s}\pi(\boldsymbol\delta\mid \bsy)\,{\rm d}\bsy.
$$
Then
$$
\mathbb E\bigg[\frac{Z_\infty'}{Z_\infty}-\frac{Z'}{Z}\bigg]=O(s^{-\frac{2}{p}+1}),
$$
where the implied coefficient is independent of $s$.
\end{theorem}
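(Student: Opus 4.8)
The plan is to present this as a direct application of the abstract dimension truncation result \cite[Theorem~4.3]{gk22}, so the task reduces to checking that its hypotheses are met in our setting. First I would record the two parametric regularity estimates already available: by Lemma~\ref{lemma:deriv} the numerator integrand $\bsy\mapsto a(\bsx,\bsy)\pi(\boldsymbol\delta\mid\bsy)$ satisfies $|\partial_{\bsy}^{\bsnu}(a(\bsx,\bsy)\pi(\boldsymbol\delta\mid\bsy))|\le C\,((|\bsnu|+1)!)^{\sigma}\bsbeta^{\bsnu}$ uniformly in $\bsx\in D$ and $\bsy$, while by Lemma~\ref{lemma:paramregnorm} the denominator integrand $\bsy\mapsto\pi(\boldsymbol\delta\mid\bsy)$ satisfies $|\partial_{\bsy}^{\bsnu}\pi(\boldsymbol\delta\mid\bsy)|\le C'(|\bsnu|!)^{\sigma}(2^{\sigma}C_u\mu_{\min}^{-1/2}\bsb)^{\bsnu}$. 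Both dominating sequences are positive constant multiples of $\boldsymbol\rho$ (cf.~\eqref{eq:betadef} and Lemma~\ref{lemma:inductive}), so the standing assumptions $\boldsymbol\rho\in\ell^p(\mathbb N)$, $p\in(0,1)$, and $\rho_1\ge\rho_2\ge\cdots$ transfer to them verbatim; these are precisely the summability and monotone-ordering inputs required by \cite[Theorem~4.3]{gk22}.

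Concerning the ratio $Z'/Z$, one can either appeal directly to the version of \cite[Theorem~4.3]{gk22} that addresses posterior expectations, or---reducing to the single-integral case---use the identity already exploited in the proof of Theorem~\ref{thm:qmcerror}, namely
$$
\frac{Z'}{Z}-\frac{Z_\infty'}{Z_\infty}=\frac{Z'-Z_\infty'}{Z_\infty}+\frac{Z'\,(Z_\infty-Z)}{Z\,Z_\infty},
$$
together with the uniform bounds $0<C_{\boldsymbol\delta,\Gamma}^{-1}\le Z,\,Z_\infty$ and $0\le Z'\le a_{\max}$ (using $0<\pi(\boldsymbol\delta\mid\cdot)\le 1$ and $a\le a_{\max}$ on a domain of unit measure). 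This leaves the two single-integral truncation estimates $|Z'-Z_\infty'|\lesssim s^{-2/p+1}$ and $|Z-Z_\infty|\lesssim s^{-2/p+1}$, each supplied by \cite[Theorem~4.3]{gk22} from the derivative bounds above. Since $Z'$, $Z$, $Z_\infty'$, $Z_\infty$ are all deterministic given $\boldsymbol\delta$, the resulting bound on the absolute value immediately yields the stated estimate.

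It is worth recalling the mechanism behind the exponent $-2/p+1$, which is sharper than the exponent $-2/p+2$ one gets from a naive second-order Taylor expansion in all discarded coordinates at once. One writes the truncation error of the integrand as a telescoping sum over the coordinates $j>s$, where the $j$-th summand compares retaining $y_j$ against zeroing it out while $y_{j+1},y_{j+2},\ldots$ are already zero; a second-order Taylor expansion in the single variable $y_j$ then has a vanishing first-order term because $\int_{-1/2}^{1/2}y_j\,\rd y_j=0$, and its remainder is controlled by $|\partial_{y_j}^2(\cdot)|\le c\,(2!)^{\sigma}\rho_j^2$ (the derivative bound at $\bsnu=2\boldsymbol e_j$), so the $j$-th contribution is $O(\rho_j^2)$. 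Summing yields $O\big(\sum_{j>s}\rho_j^2\big)=O(s^{-2/p+1})$ by Stechkin's inequality applied to $\boldsymbol\rho^2\in\ell^{p/2}$ with non-increasing entries; expanding one coordinate at a time is exactly what turns a square of a sum into a sum of squares. Because all of this is packaged inside \cite[Theorem~4.3]{gk22}, I do not expect any genuine obstacle here; the only real work is bookkeeping---absorbing the shifted factorial via $((|\bsnu|+1)!)^{\sigma}\le (|\bsnu|+1)^{\sigma}(|\bsnu|!)^{\sigma}$ and the $(2^{\sigma}C_u\mu_{\min}^{-1/2})^{|\bsnu|}$ prefactor of Lemma~\ref{lemma:paramregnorm} into the admissible constants of the abstract theorem, and confirming that the Gevrey parameter $\sigma\ge1$ only inflates constants and does not degrade the rate.
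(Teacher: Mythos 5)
Your proposal is correct and matches the paper's treatment: the paper offers no proof beyond the citation of \cite[Theorem~4.3]{gk22}, and what you supply---checking that the derivative bounds of Lemmas~\ref{lemma:paramregnorm} and~\ref{lemma:deriv} furnish dominating sequences that are constant multiples of $\boldsymbol\rho$ (hence inherit $\ell^p$-summability and monotonicity), reducing the ratio to the two single-integral truncation errors via the same decomposition as in~\eqref{eq:ratioestim}, and noting the uniform bounds $Z,Z_\infty\geq C_{\boldsymbol\delta,\Gamma}^{-1}$ and $Z'\leq a_{\max}$---is exactly the verification the citation leaves implicit. Your closing sketch of the telescoping/Taylor mechanism behind the exponent $-2/p+1$ is accurate and, for $p\in(0,1)$, would even serve as a self-contained elementary proof of the single-integral estimates, but it does not change the route: both you and the paper ultimately rest on the abstract dimension-truncation result.
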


The finite element error satisfies the following bound.
\begin{theorem}\label{thm:femerror}
Let the assumptions of Lemma~\ref{lemma1} hold. In addition, let $D$ be a convex, bounded polyhedron and suppose that $a(\cdot,\bsy)\in C^\infty(\overline{D})$ for all $\bsy\in (-1/2,1/2)^s$. Moreover, suppose that, for any $t\in[0,1/2)$, $u(\cdot,\bsy)\in H^{\frac32+t}(D)$ and $\|u(\cdot,\bsy)\|_{H^{\frac32+t}(D)}={O}(z_1^{-t-\varepsilon})$ as $z_1\to 0$ with $\varepsilon\in(0,1-t)$.  Then
$$
\sup_{\bsy\in (-\frac12,\frac12)^s}\bigg|\frac{Z_n'}{Z_n}-\frac{Z_{n,h}'}{Z_{n,h}}\bigg|\leq C|I|h^{1+2t},
$$
where $C>0$ is independent of $\bsy$.
\end{theorem}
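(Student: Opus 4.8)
The plan is to peel off the ratio structure, reduce to the finite element error of the electrode potentials, and then use a duality argument to gain the doubled convergence rate. Since $Z_n,Z_{n,h},Z_n',Z_{n,h}'$ are all averages of the respective integrands over the \emph{same} $n$ shifted lattice points, the algebraic identity used in~\eqref{eq:ratioestim} gives, uniformly in the arguments,
\begin{align*}
\bigg|\frac{Z_n'}{Z_n}-\frac{Z_{n,h}'}{Z_{n,h}}\bigg|\le \frac{|Z_n'|\,|Z_n-Z_{n,h}|}{|Z_n Z_{n,h}|}+\frac{|Z_n'-Z_{n,h}'|}{|Z_{n,h}|}.
\end{align*}
Here $|Z_n'|\le a_{\max}$ since $0\le\pi(\boldsymbol\delta\mid\cdot)\le 1$ and $a\le a_{\max}$, and $Z_n,Z_{n,h}$ are bounded below by a positive constant depending only on $\boldsymbol\delta$, $\Gamma$, $a_{\max}$ and $\varsigma_\pm$, because the continuous and finite element observation operators $\mathcal O,\mathcal O_h$ are uniformly bounded --- the latter via C\'ea's lemma and the {\em a priori} bound of Section~\ref{sec:pcem} --- so the corresponding Gaussian likelihoods stay away from zero. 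Each of $|Z_n-Z_{n,h}|$, $|Z_n'-Z_{n,h}'|$ is an average of terms $|\pi(\boldsymbol\delta\mid\bsy)-\pi_h(\boldsymbol\delta\mid\bsy)|$ (in the primed case multiplied by $a(\bsx,\bsy)\le a_{\max}$), where $\pi_h$ is the likelihood built from $\mathcal O_h$; and since $\bsp\mapsto{\rm e}^{-\frac12(\boldsymbol\delta-\bsp)^{\rm T}\Gamma^{-1}(\boldsymbol\delta-\bsp)}$ is globally Lipschitz on $\mathbb R^M$ (bounded gradient), the mean value theorem yields $|\pi(\boldsymbol\delta\mid\bsy)-\pi_h(\boldsymbol\delta\mid\bsy)|\le L|\mathcal O(\bsy)-\mathcal O_h(\bsy)|=L|U(\bsy)-U_h(\bsy)|$. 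Thus the whole estimate reduces to showing $\sup_{\bsy}|U(\bsy)-U_h(\bsy)|\le C|I|h^{1+2t}$.

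For the latter, C\'ea's lemma (coercivity and continuity of $B(\cdot,\cdot)$ on $\mathcal H$ are uniform in $\bsy$ by Assumption~\ref{assumption} and the bounds on the $z_m$) together with the comparison element $(\Pi_h u(\cdot,\bsy),U(\bsy))$, where $\Pi_h$ is a quasi-interpolation operator onto $V_h$, and the standard interpolation estimate applied to $u(\cdot,\bsy)\in H^{3/2+t}(D)$, give the energy-norm bound $\|(u(\cdot,\bsy),U(\bsy))-(u_h(\cdot,\bsy),U_h(\bsy))\|_{\mathcal H}\lesssim h^{1/2+t}\|u(\cdot,\bsy)\|_{H^{3/2+t}(D)}\lesssim |I|h^{1/2+t}$, the factor $|I|$ reflecting the linear dependence of the pCEM solution on the current pattern. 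To upgrade $h^{1/2+t}$ to $h^{1+2t}$ for the electrode potentials, I would run an Aubin--Nitsche argument: after fixing a gauge, each component of $\mathcal O$ is a bounded linear functional $\ell$ on $\mathcal H$, its adjoint problem $B((\psi,\Psi),(v,V))=\ell((v,V))$ is again a pCEM variational problem and so its solution inherits the same $H^{3/2+t}$-regularity; Galerkin orthogonality gives $\ell((u,U)-(u_h,U_h))=B((u,U)-(u_h,U_h),(\psi,\Psi)-(\psi_h,\Psi_h))$, and Cauchy--Schwarz combined with the energy estimates for the primal and the adjoint discretizations produces $O(|I|h^{1/2+t})\cdot O(h^{1/2+t})=O(|I|h^{1+2t})$. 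Substituting this back into the first paragraph completes the proof.

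The main obstacle is the regularity input to the duality step: one must justify that the pCEM adjoint problem has the claimed Sobolev smoothness --- the electrode endpoints produce corner/edge singularities, and the estimate must be uniform over $\bsy\in(-1/2,1/2)^s$ and carry the correct $z_1$-scaling that matches the hypothesis $\|u(\cdot,\bsy)\|_{H^{3/2+t}(D)}=O(z_1^{-t-\varepsilon})$ --- which requires elliptic regularity theory for mixed Robin/Neumann boundary value problems; alternatively, one may quote the relevant convergence rate from the existing finite element analysis of the complete electrode model. A secondary point is gauge consistency: one has to fix the same normalization for the continuous and discrete electrode potentials so that $|U(\bsy)-U_h(\bsy)|$ is well defined and the functionals $\ell$ are bounded on the quotient space $\mathcal H$.
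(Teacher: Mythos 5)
Your proposal follows essentially the same route as the paper: the ratio decomposition as in~\eqref{eq:ratioestim}, a Lipschitz bound on the Gaussian likelihood reducing everything to $\sup_{\bsy}\|\mathcal O(\bsy)-\mathcal O_h(\bsy)\|$, and then the doubled rate $O(h^{1+2t})$ for the electrode potentials. The Aubin--Nitsche duality step you sketch (and correctly flag as the delicate point) is precisely the argument of~\cite[Section~5.2]{stratos} that the paper invokes by citation, with the required $H^{3/2+t}$ regularity and its $z_1$-scaling taken as hypotheses of the theorem rather than proved.
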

\begin{proof} The ratio estimator can be bounded by a linear combination of $|Z_n'-Z_{n,h}'|$ and $|Z_n-Z_{n,h}|$ as in~\eqref{eq:ratioestim}. Below, we focus on the former term since the second term can be bounded in a completely analogous manner. Letting $\mathcal O_h(\bsy)$ denote the quantity $\mathcal O(\bsy)$ when $U$ is replaced with its finite element counterpart, we obtain
\begin{align*}
|Z_n'-Z_{n,h}'|&=\bigg|\int_{(-\frac12,\frac12)^s}a(\bsx,\bsy){[\rm e}^{-\frac12 \|\boldsymbol\delta-\mathcal O(\bsy)\|_{\Gamma^{-1}}^2}-{\rm e}^{-\frac12 \|\boldsymbol\delta-\mathcal O_h(\bsy)\|_{\Gamma^{-1}}^2}]\,{\rm d}\bsy\bigg|\\
&\leq a_{\max}\int_{(-\frac12,\frac12)^s}\left|{\rm e}^{-\frac12 \|\boldsymbol\delta-\mathcal O(\bsy)\|_{\Gamma^{-1}}^2}-{\rm e}^{-\frac12 \|\boldsymbol\delta-\mathcal O_h(\bsy)\|_{\Gamma^{-1}}^2}\right|\,{\rm d}\bsy\\
&\leq a_{\max}\sup_{\bsy\in (-\frac12,\frac12)^s}\big|\,\|\boldsymbol\delta-\mathcal O(\bsy)\|_{\Gamma^{-1}}-\|\boldsymbol\delta-\mathcal O_h(\bsy)\|_{\Gamma^{-1}}\,\big|\\
&\leq a_{\max}\sup_{\bsy\in (-\frac12,\frac12)^s}\|\mathcal O(\bsy)-\mathcal O_h(\bsy)\|_{\Gamma^{-1}}\\
&\leq a_{\max}\mu_{\min}^{-1/2}\sup_{\bsy\in (-\frac12,\frac12)^s}\|\mathcal O(\bsy)-\mathcal O_h(\bsy)\|.
\end{align*}
The final term can be bounded by a term of order $O(h^{1+2t})$ by following the argument given in~\cite[Section~5.2]{stratos}. The argument is based on the best approximation property
$$
\inf_{w\in \mathbb P_1}\|v-w\|_{H^1(D)}\leq Ch^{\frac12+t}\|v\|_{H^{\frac32+t}(D)},\quad t\in (0,\tfrac12),
$$
proved in \cite[Lemma~3.4]{stenberg} for bounded, polygonal domains $D\subset\mathbb R^d$, $d\in\{2,3\}$, as well as the inequality
$$
\|\mathcal O(\bsy)-\mathcal O_h(\bsy)\|\lesssim \inf_{w\in \mathbb P_1}\|u-w\|_{H^1(D)}.
$$
The claim follows as specified in~\cite{stratos} provided that the conditions $u(\cdot,\bsy)\in H^{\frac32+t}$ for any $t\in [0,1/2)$ and $\|u(\cdot,\bsy)\|_{H^{\frac32+t}(D)}=O(z_1^{-t-\varepsilon})$ hold.
\end{proof}

{\em Remark.} It is known that if $D$ has a $C^\infty$ boundary, the conditions $u(\cdot,\bsy)\in H^{\frac32+t}(D)$ for any $t\in [0,1/2)$ and $\|u(\cdot,\bsy)\|_{H^{\frac32+t}(D)}=O(z_1^{-t-\varepsilon})$ are satisfied (see~\cite{stratos}). Meanwhile, it is known that if $D$ is not a convex domain, then these conditions are violated (see~\cite{constabel}). However, in the case where $D$ is merely a convex and bounded polyhedron, there do not seem to be any results in the existing literature ensuring that $u(\cdot,\bsy)\in H^{\frac32+t}(D)$ for any $t\in[0,1/2)$ and $\|u(\cdot,\bsy)\|_{H^{\frac32+t}(D)}=O(z_1^{-t-\varepsilon})$, which is why we needed to make this an additional assumption in the statement of Theorem~\ref{thm:femerror}.

\medskip

We can merge the dimension truncation error, QMC error, and finite element error into a combined error bound.
\begin{theorem}
Let the assumptions of Theorems~\ref{thm:qmcerror}--\ref{thm:femerror} hold. Then we have the combined error estimate
\begin{align*}
    \sqrt{\mathbb E_{\boldsymbol\Delta}\bigg|\frac{Z_\infty'}{Z_\infty}-\frac{Z_{n,h}'}{Z_{n,h}}\bigg|^2}=O \left( s^{-\frac{2}{p}+1} + n^{-\min\{\frac{1}{p}-\frac12,1-\varepsilon\}} +  h^{1+2t} \right),
\end{align*}
for all~$\varepsilon\in(0,1/2)$.
\end{theorem}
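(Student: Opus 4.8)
The plan is to reduce the combined estimate to the three error bounds already established (dimension truncation, QMC cubature, finite element), by means of a telescoping decomposition together with Minkowski's inequality in $L^2$ with respect to the random shift. Writing $A:=Z_\infty'/Z_\infty$, $B:=Z'/Z$, $C:=Z_n'/Z_n$ and $D:=Z_{n,h}'/Z_{n,h}$, I would start from
\begin{align*}
\frac{Z_\infty'}{Z_\infty}-\frac{Z_{n,h}'}{Z_{n,h}}=(A-B)+(B-C)+(C-D),
\end{align*}
so that, taking the $L^2$-norm over $\boldsymbol\Delta\sim\mathcal U([0,1]^s)$ and using the triangle inequality,
\begin{align*}
\sqrt{\mathbb E_{\boldsymbol\Delta}|A-D|^2}\le\sqrt{\mathbb E_{\boldsymbol\Delta}|A-B|^2}+\sqrt{\mathbb E_{\boldsymbol\Delta}|B-C|^2}+\sqrt{\mathbb E_{\boldsymbol\Delta}|C-D|^2}.
\end{align*}

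Next I would bound the three summands separately. The first term $A-B=Z_\infty'/Z_\infty-Z'/Z$ carries no dependence on the shift $\boldsymbol\Delta$ (it is the dimension truncation error of the deterministic posterior ratio), hence $\sqrt{\mathbb E_{\boldsymbol\Delta}|A-B|^2}=|A-B|$, which the dimension truncation theorem stated above bounds by $O(s^{-2/p+1})$ with an $s$-independent implied constant. The second term is precisely the quantity controlled in Theorem~\ref{thm:qmcerror}: with the POD weights~\eqref{eq:podweights} and the associated choice of $\lambda$, one gets $\sqrt{\mathbb E_{\boldsymbol\Delta}|B-C|^2}\le C\,n^{-\min\{1/p-1/2,\,1-\varepsilon\}}$, $C$ independent of $s$; here I would invoke the $(1-\varepsilon)$-exponent that the proof of Theorem~\ref{thm:qmcerror} actually delivers in the regime $p\in(0,\min\{2/3,1/\sigma\})$ (rather than the optimistic exponent $1$ appearing in its statement), since that is the rate one can legitimately claim. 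The third term is bounded deterministically and uniformly in $\boldsymbol\Delta$ by Theorem~\ref{thm:femerror}, namely $|C-D|\le C|I|\,h^{1+2t}$, whence $\sqrt{\mathbb E_{\boldsymbol\Delta}|C-D|^2}\le C|I|\,h^{1+2t}$. (All three estimates tacitly use that the denominators $Z,Z_n,Z_{n,h}$ are uniformly bounded below, which follows from the finiteness of $C_{\boldsymbol\delta,\Gamma}$ established in~\eqref{eq:ratioestim} and its analogues, since the likelihood is bounded below on the compact box $(-1/2,1/2)^s$.)

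Summing the three bounds yields the asserted estimate. Because the truncation and QMC implied constants are $s$-independent and the finite element constant is independent of $\bsy$ (hence of the lattice nodes and of the shift), the overall implied constant is $s$-independent; it depends on $p,\sigma,\varepsilon,t$, the data $\boldsymbol\delta$, the noise covariance $\Gamma$ (through $\mu_{\min}$), the geometry $D,E$, the contact impedances, and $|I|$. I do not expect a genuine analytic obstacle here—the argument is essentially bookkeeping—but the two points to watch are: fixing a single $\varepsilon\in(0,1/2)$ and using it consistently across Theorem~\ref{thm:qmcerror} and the combined statement, and checking that the hypotheses of all three cited results hold simultaneously (in particular $\boldsymbol\rho\in\ell^p$ with $\rho_1\ge\rho_2\ge\cdots$ for the truncation result, and convexity/polyhedrality of $D$ together with the $H^{3/2+t}$-regularity assumption for the finite element result), which is exactly what ``the assumptions of Theorems~\ref{thm:qmcerror}--\ref{thm:femerror}'' guarantees.
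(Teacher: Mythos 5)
Your decomposition-plus-triangle-inequality argument is exactly the intended proof; the paper states this theorem without proof precisely because it follows by combining the three preceding theorems in the way you describe. Your side remarks (that the truncation term is deterministic in $\boldsymbol\Delta$, that the QMC rate should be quoted as $1-\varepsilon$ rather than $1$, and that the finite element bound is uniform in $\bsy$) are all correct and consistent with the stated result.
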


\section{Numerical experiments}\label{sec:numex}
\begin{figure}[!t]\centering
\subfloat[Experiment 1 (ground truth)]{\label{fig:1a}\includegraphics[width=.45\textwidth,trim=2.1cm .8cm 2cm .9cm,clip]{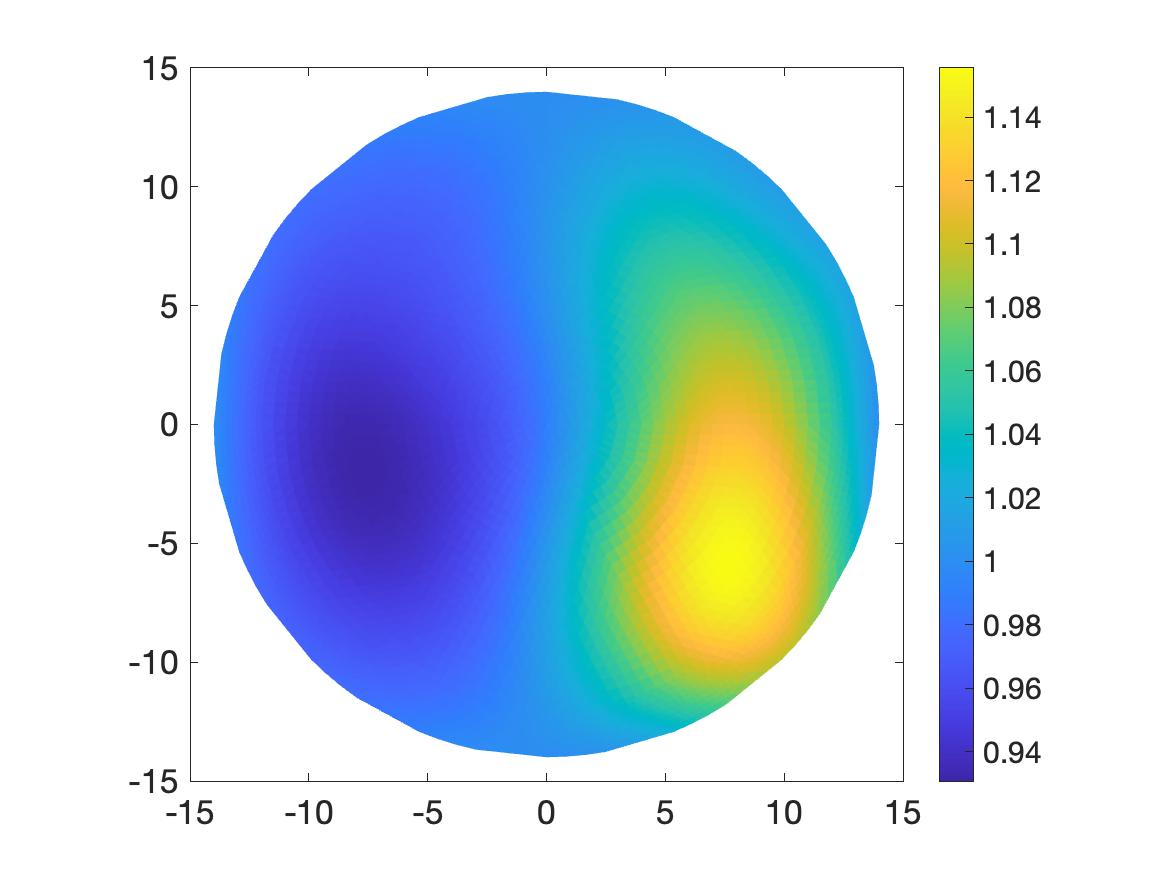}}
\subfloat[Experiment 2 (ground truth)]{\label{fig:1b}\includegraphics[width=.45\textwidth,trim=2.1cm .8cm 2cm .9cm,clip]{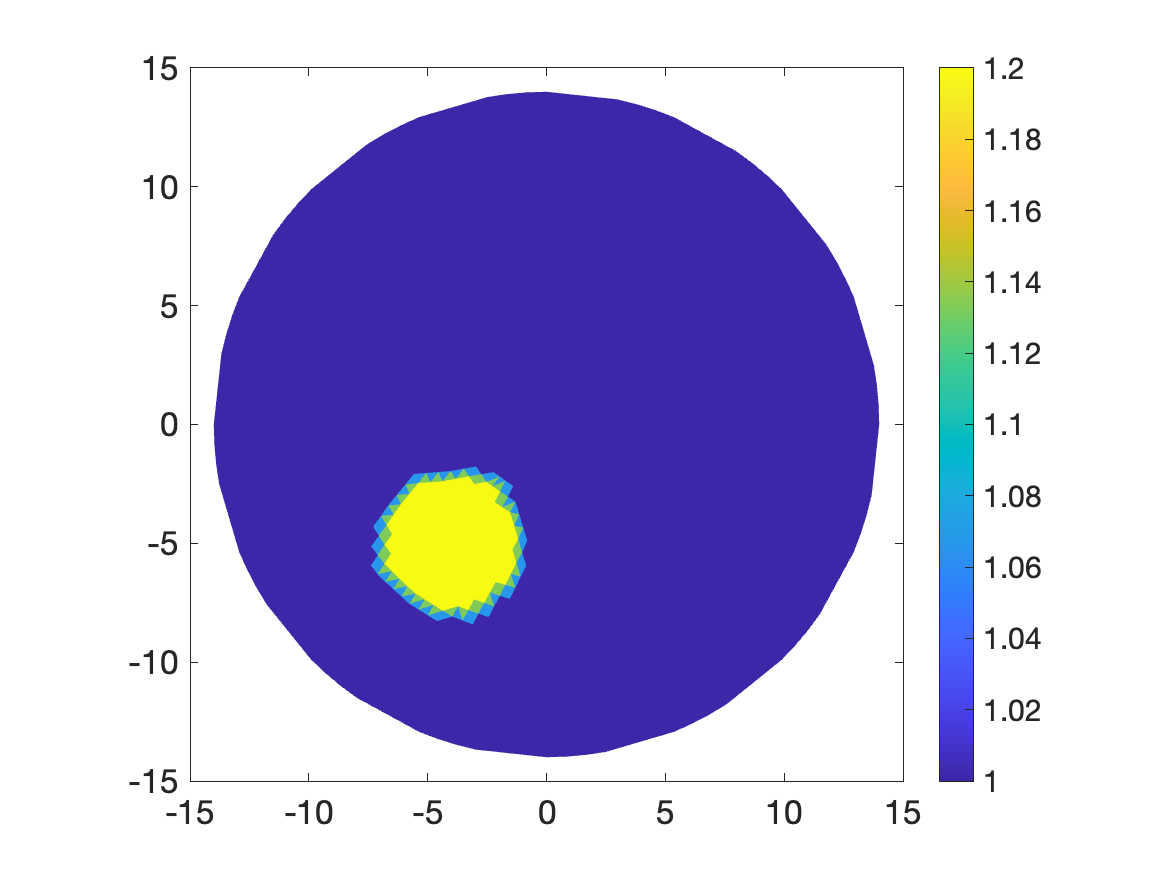}}
\caption{Two randomly generated ground truth conductivity fields which were used to simulate the noisy electrode measurements for numerical inversion.}\label{fig:groundtruth}
\subfloat[Experiment 1 (reconstruction)]{\label{fig:2a}\includegraphics[width=.45\textwidth,trim=2.1cm .8cm 2cm .9cm,clip]{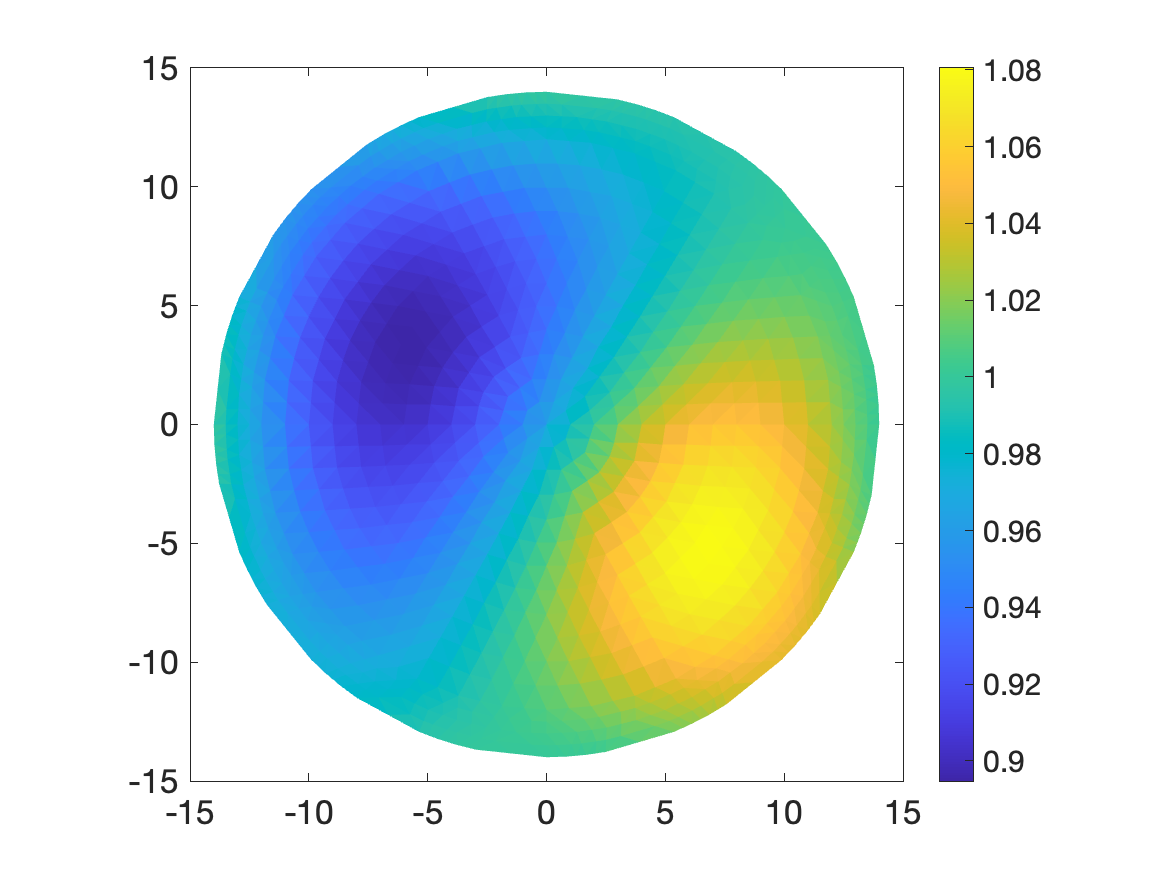}}
\subfloat[Experiment 2 (reconstruction)]{\label{fig:2b}\includegraphics[width=.45\textwidth,trim=2.1cm .8cm 2cm .9cm,clip]{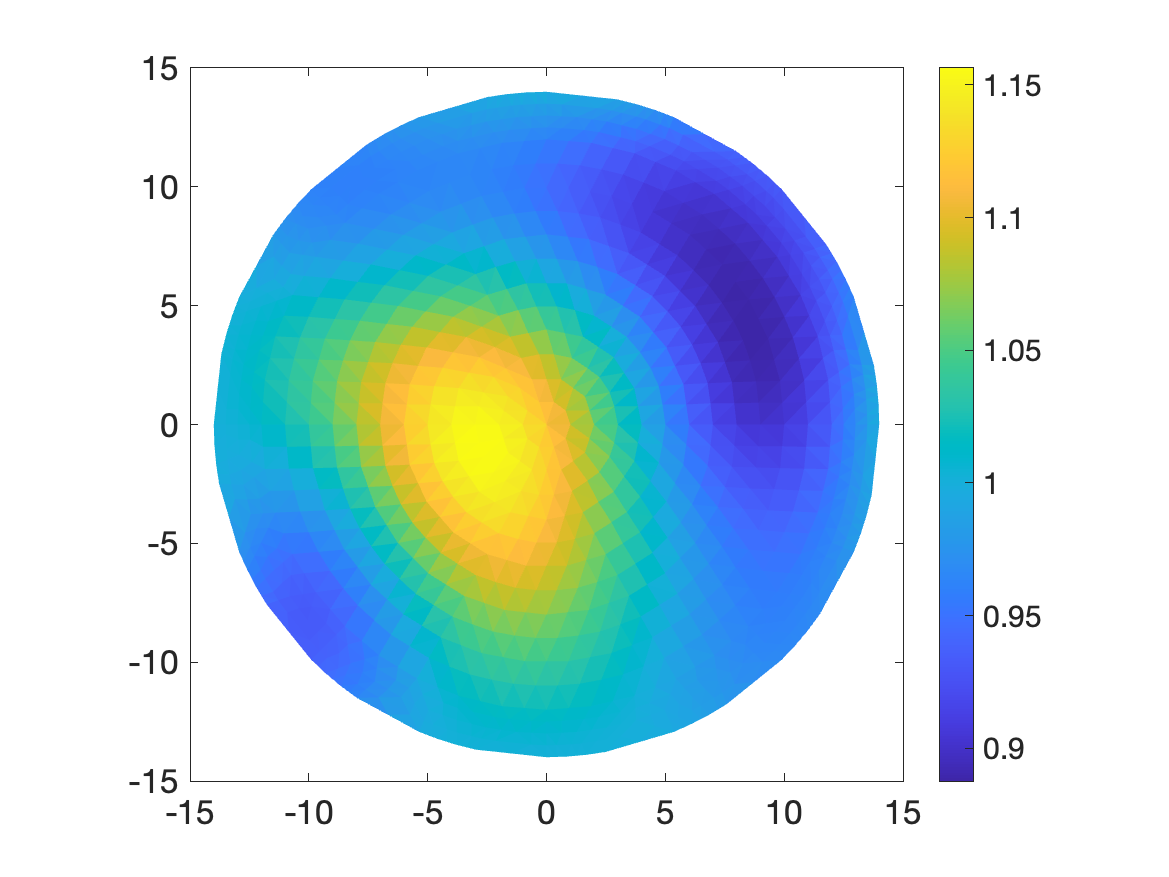}}
\caption{Reconstructions corresponding to noisy voltage measurements generated using the phantoms in Figure~\ref{fig:1a} and Figure~\ref{fig:1b}.
}
\end{figure}

\begin{figure}[!t]\centering
\subfloat[Experiment 1 (credible margin)]{\label{fig:creda}\includegraphics[width=.45\textwidth,trim=2.1cm .8cm 2cm .9cm,clip]{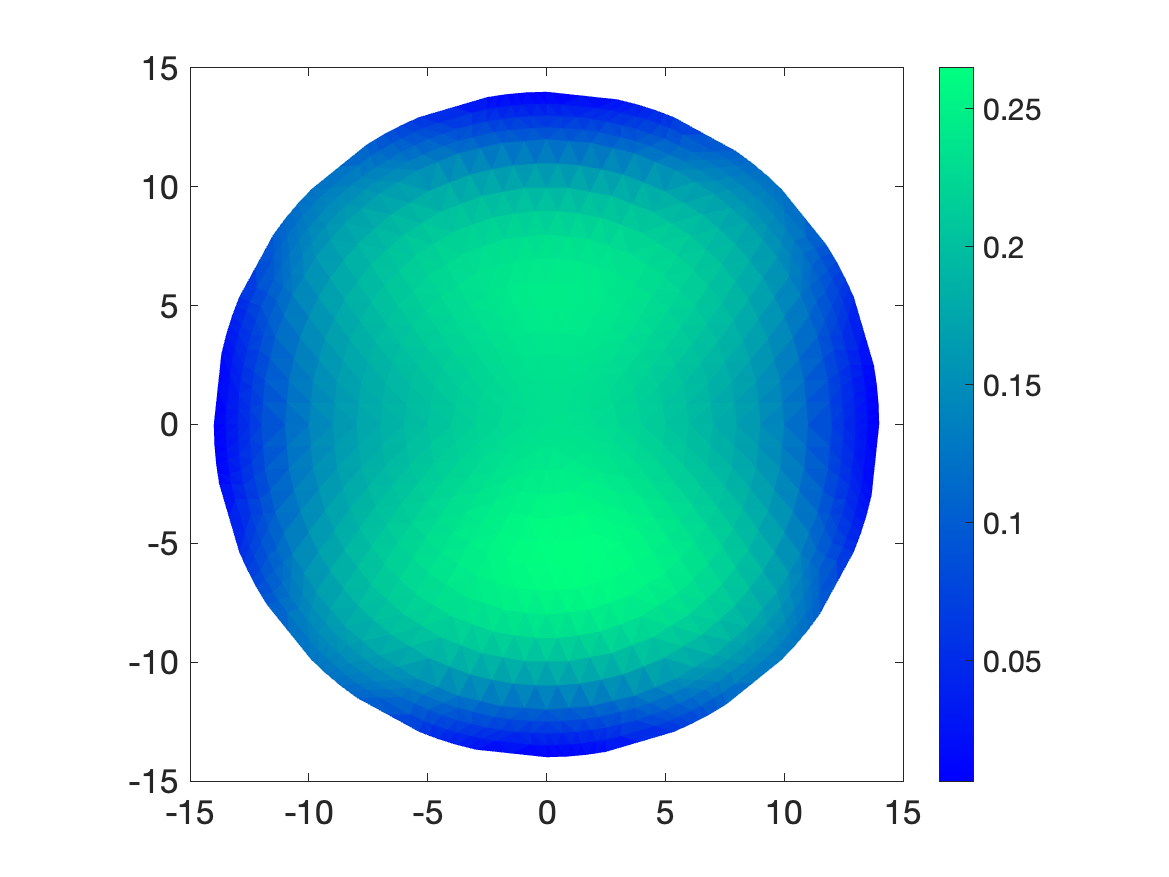}}
\subfloat[Experiment 2 (credible margin)]{\label{fig:credb}\includegraphics[width=.45\textwidth,trim=2.1cm .8cm 2cm .9cm,clip]{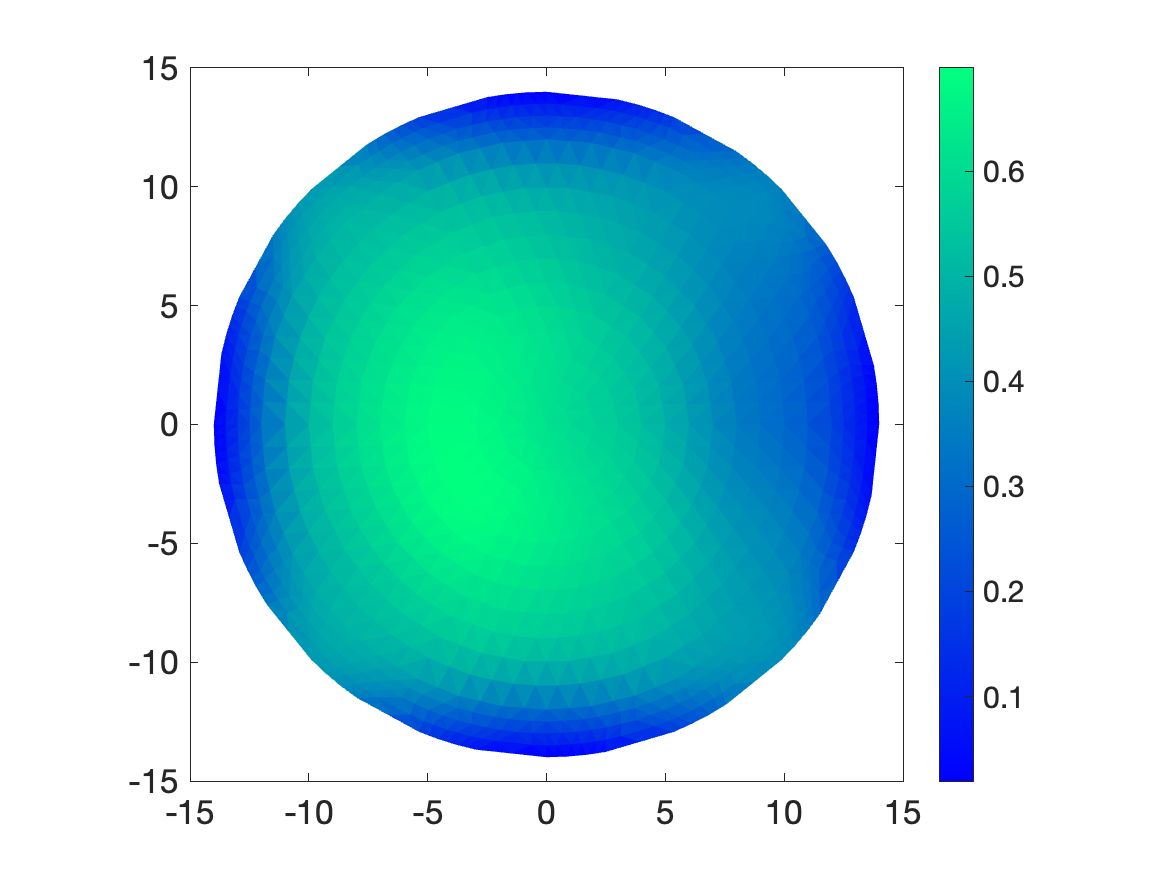}}
\caption{The credible margins of the approximate 95\% credible envelopes corresponding to the reconstructions in Figure~\ref{fig:2a} and Figure~\ref{fig:2b}.
}\label{fig:cred}
\end{figure}

We consider the problem of reconstructing the conductivity based on simulated measurement data consisting of voltages and currents on a set of electrodes placed on the boundary of the computational domain $D:=\{\bsx\in \mathbb R^2:\|\bsx\|\leq 14\}$. We let $\{E_k\}_{k=1}^L\subseteq \partial D$ be an array of $L:=16$ equispaced, non-overlapping electrodes of width $2.8$ on the boundary $\partial D$. We fix the current pattern $\boldsymbol I_k:=\boldsymbol e_1-\boldsymbol e_{k+1}\in\mathbb R_{\diamond}^L$, $k\in\{1,\ldots,L-1\}$. 

\begin{figure}[!t]
\subfloat[Experiment 1]{\label{fig:3a}\begin{tikzpicture}
    \begin{loglogaxis}[
        width=.45\textwidth,
        height=.45\textwidth,
        xlabel={number of nodes $n$},
        ylabel={R.M.S.~error},
        legend pos=north east,
        grid=both,
        grid style={dash pattern=on 1pt off 1pt on 1pt off 1pt},
        xmin=10000, xmax=1500000,
        ymin=3e-4, ymax=1e-1,
        xtick={10000, 100000, 1000000},
        ytick={1e-4, 1e-3, 1e-2, 1e-1}
    ]
        \addplot[
        color=blue,
        mark=*,
        only marks
    ] coordinates {
        (16384, 0.0347209)
        (32768, 0.0277225)
        (65536, 0.0190725)
        (131072, 0.0129361)
        (262144, 0.00755036)
        (524288, 0.00718684)
        (1048576, 0.00470984)
    };
    \addlegendentry{MC}
    \addplot[
        color=red,
        mark=square* ,
        only marks
    ] coordinates {
        (16384, 0.0148268)
        (32768, 0.00805214)
        (65536, 0.00480725)
        (131072, 0.00292949)
        (262144, 0.00148854)
        (524288, 0.000980165)
        (1048576, 0.000621175)
    };
   \addlegendentry{QMC}


        \addplot[
        color=red,
        domain=16384:1048576,
        samples=100,
        dashed,
        line width=1pt
    ] {24.212957 * (x^(-0.767824))};

    \addplot[
        color=blue,
        domain=16384:1048576,
        samples=100,
        dashed,
        line width=1pt
    ] {4.45029488 * (x^(-0.495653))};
    
    \end{loglogaxis}
\end{tikzpicture}}\subfloat[Experiment 2]{\label{fig:3a}\begin{tikzpicture}
    \begin{loglogaxis}[
        width=.45\textwidth,
        height=.45\textwidth,
        xlabel={number of nodes $n$},
        ylabel={R.M.S.~error},
        legend pos=north east,
        grid=both,
        grid style={dash pattern=on 1pt off 1pt on 1pt off 1pt},
        xmin=10000, xmax=1500000,
        ymin=1e-2, ymax=1e0,
        xtick={10000, 100000, 1000000},
        ytick={1e-4, 1e-3, 1e-2, 1e-1,1e0}
    ]
        \addplot[
        color=blue,
        mark=* ,
        only marks
    ] coordinates {
        (16384, 0.461215)
        (32768, 0.331697)
        (65536, 0.196404)
        (131072, 0.180036)
        (262144, 0.129976)
        (524288, 0.086197)
        (1048576, 0.0480702)
    };
    \addlegendentry{MC}
    \addplot[
        color=red,
        mark=square* ,
        only marks
    ] coordinates {
        (16384, 0.476305)
        (32768, 0.282797)
        (65536, 0.169824)
        (131072, 0.119845)
        (262144, 0.0825681)
        (524288, 0.0541753)
        (1048576, 0.0367871)
    };
   \addlegendentry{QMC}

    
        \addplot[
        color=red,
        domain=16384:1048576,
        samples=100,
        dashed,
        line width=1pt
    ] {151.65339276 * (x^(-0.603298))};

    \addplot[
        color=blue,
        domain=16384:1048576,
        samples=100,
        dashed,
        line width=1pt
    ] {65.6760282 * (x^(-0.509664))};
    
    \end{loglogaxis}
\end{tikzpicture}}
\caption{The R.M.S.~errors for reconstructions corresponding to the reconstructions illustrated in Figures~\ref{fig:2a}~and~\ref{fig:2b} with $R=16$ random shifts plotted alongside the corresponding least squares fits.}\label{fig:3}
\end{figure}

For the reconstruction of the target conductivity (``ground truth''), we use the parameterization
\begin{align}\label{eq:randoma1}
a(\bsx, \bsy) = \exp{\biggl(\sum_{j=1}^{20} y_j \psi_j(\bsx)\biggr)},\quad \bsx\in D,~\bsy\in (-1/2,1/2)^{20},
\end{align}
with
\begin{align}\label{eq:randoma2}
\psi_j(\bsx):=\frac{5}{(k_j^2+\ell_j^2)^{\vartheta}}\sin(\tfrac{1}{14}\pi k_jx_1)\sin(\tfrac{1}{14}\pi \ell_jx_2),\quad\vartheta>1,
\end{align}
where the sequence $(k_j,\ell_j)_{j\ge 1}$ is an ordering of the elements of $\mathbb N\times \mathbb N$ such that $\|\psi_1\|_{L^\infty(D)}\geq \|\psi_2\|_{L^\infty(D)}\geq \cdots$. %
We consider two different target conductivities. The electrode measurements for the first experiment correspond to a target conductivity that is a realization of the parametric model~\eqref{eq:randoma1}--\eqref{eq:randoma2} with $\vartheta=2$ using a randomly generated vector $\bsy\sim\mathcal U((-1/2,1/2)^{20})$. For the second experiment, the ground truth corresponds to the piecewise constant target conductivity $\bsx\mapsto 1+0.2\,\chi_{\|\bsx+[4,5]^{\rm T}\|\leq 3}(\bsx)$. These target conductivities are illustrated in Figure~\ref{fig:groundtruth}. In both experiments, the voltage measurements were simulated %
by solving the CEM forward model using a first order finite element discretization with maximum mesh diameter $h=0.748$. In addition, the measurements were contaminated with additive i.i.d.~Gaussian noise with covariance $\Gamma=0.014 I$. As the contact resistance of each electrode, we used the value $z_k=0.005$ for $k=1,\ldots,16$.

We approximate $\widehat a$ using the QMC ratio estimator with $n=2^{20}$ nodes. As the generating vector for the QMC rule, we used the ``off-the-shelf'' lattice generating vector~\cite[lattice-39101-1024-1048576.3600]{kuogeneratingvector}. For the computational inversion, we used a coarser finite element mesh with mesh width $h=1.496$. The computational inversion used the parametric model~\eqref{eq:randoma1}--\eqref{eq:randoma2} with $\vartheta=2$ in the first experiment and $\vartheta=1.3$ in the second experiment. The reconstructions for the two experiments are displayed in Figures~\ref{fig:2a} and~\ref{fig:2b}. In both cases, the profile and magnitude of the target conductivities are approximately recovered.%

An advantage of working in the Bayesian framework---in contrast to deterministic reconstruction algorithms such as the D-bar method~\cite{mueller2020d}---is that it is possible to estimate the uncertainty in $\widehat a$ via Bayesian credible envelopes. To this end, we have estimated the 95\% credible envelopes for the reconstructions. A conservative estimate for the $95\%$ credible envelope is $\mathbb E[\widehat a]\pm 4.47214\,\sqrt{{\rm Var}(\widehat a)}$ by Chebyshev's inequality. We computed the variance appearing in this expression using QMC pointwise in $\bsx\in D$ with $n=2^{20}$ lattice points. The obtained credible margins $4.47214\,\sqrt{{\rm Var}(\widehat a)}$ are displayed in Figure~\ref{fig:cred}.

In addition, we also assessed the convergence of the QMC estimator. To this end, we computed the value of $\widehat a$ for $n=2^\ell$, $\ell=10,\ldots,20$, and report the approximated root-mean-square (R.M.S.) errors
$$
\sqrt{\frac{1}{R(R-1)}\sum_{r'=1}^R\bigg\|\frac{1}{R}\sum_{r=1}^R \frac{Z_n'(\cdot,\boldsymbol\delta,\boldsymbol\Delta^{(r)})}{Z_n(\boldsymbol\delta,\boldsymbol\Delta^{(r)})}-\frac{Z_n'(\cdot,\boldsymbol\delta,\boldsymbol\Delta^{(r')})}{Z_n(\boldsymbol\delta,\boldsymbol\Delta^{(r')})}\bigg\|_{L^\infty(D)}^2},
$$
where we used $R=16$ random shifts $\boldsymbol\Delta^{(r)}\overset{\rm i.i.d.}{\sim}\mathcal U([0,1]^{20})$, $r=1,\ldots,16$.

We compare the performance of the QMC method to that of a Monte Carlo (MC) method. In the case of the MC method, the value of $\mathbb E[\widehat a]$ was obtained as a sample average over a random sample $\bsy\sim \mathcal U((-1/2,1/2)^s)$ of size $n=2^{\ell}$, $\ell=10,\ldots,20$. The corresponding R.M.S.~error was obtained by averaging these values over $16$ trials. The results are displayed in Figure~\ref{fig:3}. As expected, we observe the MC convergence rates $n^{-0.496}$ and $n^{-0.510}$ in Experiments 1 and 2, respectively, which are both extremely close to the theoretically expected MC converges rate $n^{-1/2}$. Meanwhile, the QMC method displays faster-than-MC cubature convergence rates in both experiments. In the case of the smooth target conductivity in Experiment 1, we observe an empirical convergence rate of $n^{-0.768}$ while in the case of the discontinuous conductivity in Experiment 2 we observe the empirical rate $n^{-0.603}$.

\section{Conclusions}\label{sec:conclusions}

Many studies on the application of QMC methods to PDE uncertainty quantification problems focus on simplified PDE models with homogeneous boundary conditions. In this paper, we considered a more realistic PDE model with intricate mixed boundary conditions. Specifically, we were able to obtain parametric regularity bounds for both the forward model and the Bayesian inverse problem. Our numerical results illustrate that QMC integration results in more accurate estimation of the state with less computational work than using a Monte Carlo method. The quality of the reconstructed features can potentially be improved using more sophisticated prior models, e.g., involving basis functions with local supports, coupling the reconstruction method with importance sampling or Laplace approximation as well as employing optimal experimental design. In addition, in real-life measurement configurations there are unknown quantities other than the conductivity that need to be recovered such as the contact resistances, electrode positions, and the shape of the computational domain. These could also be included as part of the inference problem.

\section*{Data availability statement}
No new data were created or analysed in this study.

\section*{Acknowledgements}
The work of Laura Bazahica and Lassi Roininen was supported by the Research Council of Finland (Flagship of Advanced Mathematics for Sensing, Imaging and Modelling grant 359183 and Centre of Excellence of Inverse Modelling and Imaging grant 353095).
\ifdefined\journalstyle
\section*{References}
\providecommand{\newblock}{}

\else
\bibliographystyle{unsrt}
\bibliography{qmc4eit}
\fi

\end{document}